\providecommand{\keywords}[1]{\usefont{OT1}{phv}{b}{n}  Keywords: \normalfont #1}
\providecommand{\subclass}[1]{MSC 2010 Subject classification: \normalfont #1}
\newcommand{\HorRule}{\rule{\linewidth}{.2pt}} 
\title{Article Title}
\author{John Smith, }
\normalsize \usefont{OT1}{phv}{m}{n}
\date{}
\def\abstract{\topsep=0pt\partopsep=0pt\parsep=0pt\itemsep=0pt\relax
\trivlist\item [\hskip\labelsep
{\usefont{OT1}{phv}{b}{n} \abstractname}]\if!\abstractname!\hskip-\labelsep\fi }
\let\origsection\section
\renewcommand\section{\@ifstar{\starsection}{\nostarsection}}
\newcommand\nostarsection[1]
\sectionprelude\origsection{#1}\sectionpostlude}
\newcommand\starsection[1]
\newcommand\sectionprelude{%
  \vspace{1em}
}
\newcommand\sectionpostlude{%
  \vspace{-.7em}
}
\declaretheoremstyle[
    spaceabove=6pt, 
    spacebelow=6pt, 
    headfont=\sffamily\bfseries,
    notefont=\normalfont, 
    notebraces={(}{)}, 
    bodyfont=\normalfont,
    postheadspace=1em,
    qed=$square$,
    headpunct={.}]{myproof}
\declaretheoremstyle[
    spaceabove=6pt, 
    spacebelow=6pt, 
    headfont=\sffamily\bfseries,
    notefont=\normalfont, 
    notebraces={(}{)}, 
    bodyfont=\normalfont,
    postheadspace=1em,
    qed=$\diamondsuit$,
    headpunct={.}]{mystyle}
\declaretheorem[name={Example}, style=mystyle]{example}
\declaretheorem[name={Remark}, style=mystyle]{remark}
\declaretheoremstyle[
    spaceabove=6pt, 
    spacebelow=6pt, 
    headfont=\sffamily\bfseries,
    notefont=\normalfont, 
    notebraces={(}{)}, 
    bodyfont=\normalfont,
    postheadspace=1em,
    headpunct={.}]{mystyle}
\declaretheoremstyle[
    spaceabove=6pt, 
    spacebelow=6pt, 
    headfont=\sffamily\bfseries,
    notefont=\normalfont, 
    notebraces={(}{)}, 
    bodyfont=\normalfont,
    postheadspace=1em,
    headpunct={.}]{mydef} 
\declaretheorem[name={Definition}, style=mydef]{definition}
\declaretheorem[name={Fact}, style=mystyle]{fact}
\declaretheoremstyle[
    spaceabove=6pt, 
    spacebelow=6pt, 
    headfont=\sffamily\bfseries,
    notefont=\normalfont, 
    notebraces={(}{)}, 
    bodyfont=\normalfont\itshape,
    postheadspace=1em,
    headpunct={.}]{mythm} 
\declaretheorem[name={Theorem}, style=mythm]{theorem}
\declaretheorem[name={Lemma}, style=mythm]{lemma}
\declaretheorem[name={Proposition}, style=mythm]{proposition}
\declaretheoremstyle[
    spaceabove=6pt, 
    spacebelow=6pt, 
    headfont=\sffamily\bfseries,
    notefont=\normalfont, 
    notebraces={(}{)}, 
    bodyfont=\normalfont\itshape,
    postheadspace=1em,
    qed=$square$,
    headpunct={.}]{mycor} 
\declaretheorem[name={Corollary}, style=mythm]{corol}
\newenvironment{tightitemize}
{ \begin{itemize}
    \setlength{\itemsep}{3pt}
    \setlength{\parskip}{0pt}
    \setlength{\parsep}{0pt}    }
{ \end{itemize}                  } 
\newcommand{\BIGOP}[1]
{\mathop{\mathchoice%
{\raise-0.22em\hbox{\huge $#1$}}%
{\raise-0.05em\hbox{\Large $#1$}}{\hbox{\large $#1$}}{#1}}}
\newcommand{\bigtimes}{\BIGOP{\times}}
\let\oldcap\cap
\def\cap{\,\oldcap\,}
\renewcommand{\P}{\mathbf P} 
\newcommand{\NN}{\mathbb N_0}
\newcommand{\ts}{\hspace{0.8pt}}
\newcommand{\tts}{\hspace{1pt}}
\newcommand{\nts}{\hspace{-0.5pt}}
\renewcommand{\L}[2]{\mathcal L^{#2}_{#1}}
\newcommand{\Pot}[1]{\mathcal P(#1)}
\newcommand{\F}[2]{F^{#2}_{#1}}
\newcommand{\wF}[2]{\widehat F^{#2}_{#1}}
\newcommand{\Af}[2]{A^{\scriptstyle \nts #2}_{#1}}
\newcommand{\rh}[1]{r^{}_{#1}}
\newcommand{\rc}[1]{\varrho^{}_{#1}}
\newcommand{\lam}[2]{\lambda_{#1}^{#2}}
\renewcommand{\t}[1]{\mathcal T^{}_{#1}}
\newcommand{\Tp}[2]{T_{#1}(#2)}
\newcommand{\Tps}[2]{T_{#1}^L(#2)}
\newcommand{\Ts}{T_{}^{\scriptscriptstyle L}}
\newcommand{\I}[1]{I_{#1}^{}}
\newcommand{\Cc}[1]{\mathcal C(#1)}
\newcommand{\G}[2]{G^{}_{#1} (#2)}
\newcommand{\Rr}[1]{\mathcal R(#1)}
\renewcommand{\S}{\mathcal S}
\newcommand{\Ileq}[1]{I_{#1}^{\ts l}}
\newcommand{\Igeq}[1]{I_{#1}^{\ts r}}
\newcommand{\<}{\preccurlyeq}
\renewcommand{\>}{\succcurlyeq}
\newcommand{\pF}{\preceq_{\ts \scriptscriptstyle F}}
\newcommand{\pD}{\preceq_{\ts \scriptscriptstyle P}}
\newcommand{\PP}{\mathbb P}
\newcommand{\Ftree}{F_t^{}\leftrightarrow \Ts}
\def\m#1{\def\tempa{#1}\futurelet\next\m@i}
\def\m@i{\ifx\next\bgroup\expandafter\m@ii\else\expandafter\m@end\fi}
\def\m@ii#1{m^{}_{\tempa}(#1)}
\def\m@end{m^{}_{\tempa}}
\def\mh#1{\def\tempa{#1}\futurelet\next\mh@i}
\def\mh@i{\ifx\next\bgroup\expandafter\mh@ii\else\expandafter\mh@end\fi}
\def\mh@ii#1{\stackon[-4.3pt]{\text{$m$}}{\vstretch{1.15}{\hstretch{2.5}{\hat{\phantom{\;}}}}}^{}_{\tempa}(#1)}
\def\mh@end{\stackon[-4.3pt]{\text{$m$}}{\vstretch{1.15}{\hstretch{2.5}{\hat{\phantom{\;}}}}}^{}_{\tempa}}
\DeclareMathOperator{\Maxx}{MAX}
\DeclareMathOperator{\Minn}{MIN}
\def\Max#1{\def\tempa{#1}\futurelet\next\Max@i}
\def\Max@i{\ifx\next\bgroup\expandafter\Max@ii\else\expandafter\Max@end\fi}
\def\Max@ii#1{\Maxx_{\ts t,\tts \tempa}(#1)}
\def\Max@end{\Maxx_{\ts t,\tts \tempa}^{}}
\def\Min#1{\def\tempa{#1}\futurelet\next\Min@i}
\def\Min@i{\ifx\next\bgroup\expandafter\Min@ii\else\expandafter\Min@end\fi}
\def\Min@ii#1{\Minn_{t,\tts \tempa}(#1)}
\def\Min@end{\Minn_{t,\tts \tempa}^{}}
\newcommand{\X}[2]{X_{#1}^{#2}}
\newcommand{\w}[2]{\omega_{#1}^{#2}}
\newcommand{\wind}[1]{\omega_{\text{ind}}^{#1}}
\newcommand{\wdep}[1]{\omega_{\text{dep}}^{#1}}
\newcommand{\Omeg}[1]{\Omega_{}^{#1}} 
\newcommand{\Ev}{B}
\newcommand{\Ef}{B_{\nts \scriptscriptstyle F}}
\newcommand{\Ex}{B_{\nts \scriptscriptstyle X}}
\newcommand*\xbar[1]{%
  \hbox{%
   \vbox{%
      \hrule height 0.25pt 
      \kern0.6ex
      \hbox{%
        \ensuremath{#1}%
         }}}} 
\newcommand*\tbar[1]{%
\nts\!\nts \hbox{
   \vbox{%
    \hrule height 0.3pt 
      \kern0.4ex
      \hbox{%
        \kern -0.01 em
        \ensuremath{#1}%
         }}}}
\begin{document}

\title{Fragmentation process, pruning poset for rooted forests, and M\"obius inversion}

\author{Ellen Baake \thanks{E-mail: ebaake@techfak.uni-bielefeld.de} \;\,-- \,Mareike Esser \thanks{E-mail: messer@techfak.uni-bielefeld.de} }
\date{}

\maketitle

\thispagestyle{fancy}

\begin{small}
\abstract{We consider a discrete-time Markov chain, called fragmentation process, that describes a specific way of successively removing objects from a linear arrangement. The process arises in population genetics and describes the ancestry of the genetic material of individuals in a population experiencing recombination. We aim at the law of the process over time. To this end, we investigate sets of realisations of this process that agree with respect to a specific order of events and represent each such set by a rooted (binary) tree. 
The probability of each tree is, in turn, obtained by Möbius inversion on a suitable poset of all rooted forests that can be obtained from the tree by edge deletion; we call this poset the \textit{pruning poset}. Dependencies within the fragments make it difficult to obtain explicit expressions for the probabilities of the trees. We therefore construct an auxiliary process for every given tree, which is i.i.d. over time, and which allows to give a pathwise construction of realisations that match the tree. }

\keywords{Möbius inversion, fragmentation process, poset of rooted forests}

\subclass{05A15 (primary),\,60J20 (secondary),\,05C05,\,06B99}

\end{small}


\section{Introduction}\label{sec:introduction}

Consider a linear arrangement of $n$ discrete objects captured in the set $L=\{1,\ldots,n\}$. We like to think of $L$ as a chain, and the elements of $L$ as \textit{links} of the chain, in the sense of the connecting components of a real-world chain. If links are removed, the remaining set of links splits into contiguous fragments. We will investigate the probability distribution of a Markov chain $(\F{t}{})^{}_{t\in \NN}$, $\NN:=\{0,1,2,\ldots\}$, on the set of subsets of $L$, where $\F{t}{}$ is the set of links removed until  time $t$. The details of the process will be described below; let us only note here that $\F{0}{}=\varnothing$ and that, at every point in time, \textit{at most} one link is removed from \emph{every} fragment with a given probability or rate. We focus on the discrete version of the process and briefly mention simplifications arising for the continuous-time analogue in Section~\ref{sec:tree_prob_cont}. We speak of $(\F{t}{})^{}_{t\in \NN}$ as the \emph{fragmentation process}. 

The fragmentation process and its probability distribution originate from the context of population genetics, or more precisely from the evolution of a sufficiently large population under recombination. \emph{Recombination} occurs during sexual reproduction and refers to the reshuffling of the genetic material of two parents into a `mixed'-type offspring individual, see Figure~\ref{fig:reco_and_ancestry} (left). As illustrated in Figure~\ref{fig:reco_and_ancestry} (right), in the backward perspective, each crossover causes a split of the genetic material into two contiguous fragments; one part is inherited from the mother, one from the father. If we identity the splitting events with the removal of links and assume that there is at most one splitting per pair of gene sequences (which is mostly true even for fairly large genomic regions \cite{Hillers}), the fragmentation process describes how the genetic material of a single individual is distributed across its ancestors. For more on this, see \cite{BaakeBaake2, BaakeWangenheim, Bennett, Dawson, EsserProbstBaake,Geiringer, Jennings,Martinez,Robbins,WangenheimBaakeBaake}.

\begin{figure}[bp]
\includegraphics[width=.4\textwidth]{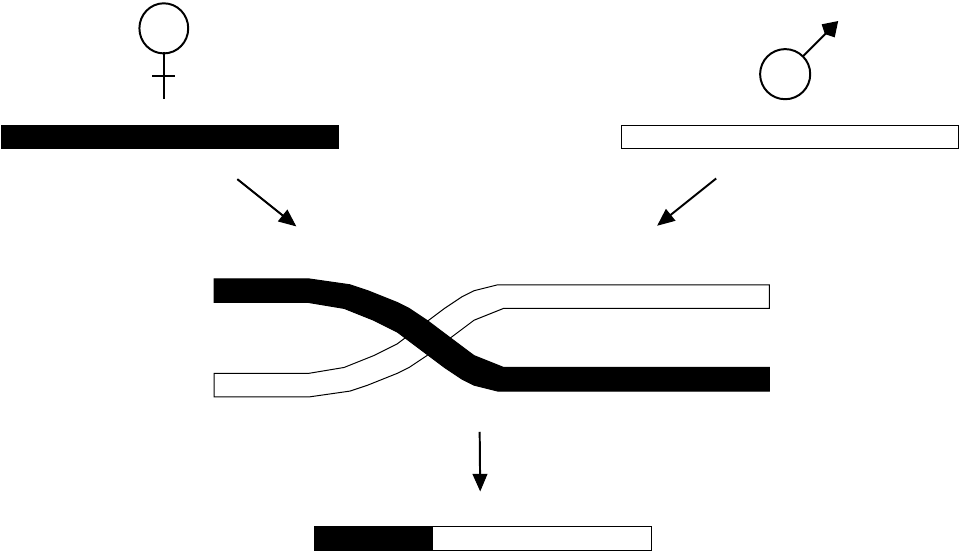}
\hfill
\includegraphics[width=.4\textwidth]{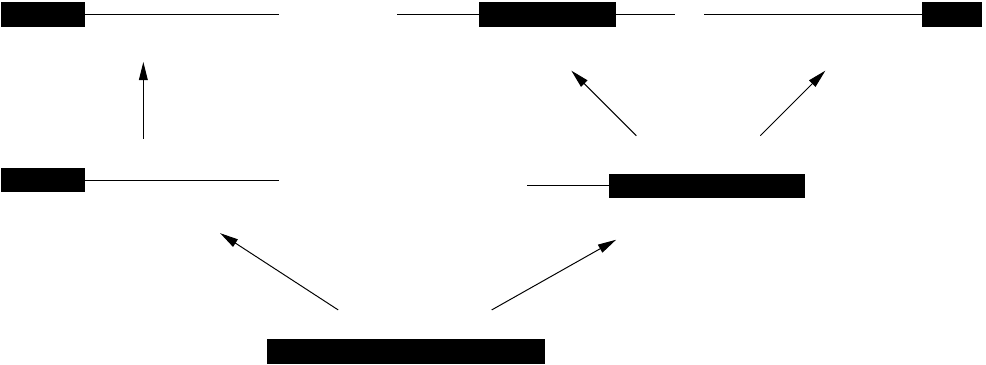}
\caption{\label{fig:reco_and_ancestry}Left: Simplified representation of a recombination event in which maternal and paternal gene sequences cross over and physically swap their genetic material to form a mixed-type offspring. Right: Ancestry of a single individual backward in time (here we go back for two generations in discrete time). The genetic material of the individual is dispersed across the two parents. Restriction to at most one crossovers ensures that the sequence splits into two contiguous blocks. Horizontal lines indicate nonancestral material, i.e. genetic material that is not passed on to the offspring and thus irrelevant for it.}
\end{figure}

In \cite{BaakeWangenheim}, we have worked out the (marginal) distribution of $\F{t}{}$ for every given $t$. Since removing a given link forbids to remove any other link in the same fragment in the same time step, links are dependent as long as they belong to the same fragment, and become independent once they have been separated. 
For each realisation of $(\F{t}{})^{}_{t\in \NN}$, the order of events therefore matters. One may thus collect all realisations of $(\F{t}{})^{}_{t\in \NN}$ that agree on the order of events and that end up in a particular state $G$ at time $t$ and represent this set of realisations as a rooted binary tree. Here, the elements of $\F{t}{}$ are identified with the vertices of that tree in such a way that the relevant time order of events of the fragmentation process is encoded by the partial order on the vertices of the tree. In \cite{BaakeWangenheim}, the probability that $\F{t}{}=G$, $G\subseteq L$, for some $t\in \NN$, is given as a sum over all probabilities related to trees with vertex set $G$. The probability for each individual tree was obtained from a technical calculation by summing over all possible combinations of branch lengths, i.e. over all possible combinations of times that $\F{t}{}$ spends in the various states. This summation led to an alternating sum over terms that reflects a decomposition of the tree into subtrees. The result provided an answer to the problem, but was somewhat unsatisfactory since both the combinatorial and the probabilistic meanings remained in the dark. 
As to the combinatorial side, the alternating sum hinted at an underlying, yet unidentified, inclusion-exclusion principle; an instance of the "wide gap between the bare statement of the principle and the skill required in recognizing that it applies to a particular combinatorial problem", as stated by Rota in his seminal work on M\"obius functions \cite{Rota}. As to the probabilistic side, the terms in the sum hinted at some underlying independence across subtrees, but were hard to interpret in detail. 

The purpose of this article is to provide both the combinatorial and the probabilistic insight into the solution of the fragmentation process, and thus to give a conceptual proof for it. 
On the combinatorial side, we will introduce a suitable poset (to be called the \emph{pruning poset}) on rooted forests that can be obtained from a rooted tree via specific edge deletion similar to the cutting-down procedure introduced by Meir and Moon \cite{MeierMoon}. In \cite{MeierMoon}, the root of a random tree is isolated by uniformly cutting edges of the tree until the tree is reduced to the root. 
In the resulting line of research (see, for example, \cite{BaurBertoin,DrmotaIksanovMoehleRoesler,Janson,MeierMoon,Panholzer}), one is interested in the distribution and limiting behaviour of the number of cuts required to isolate the root for various classes of random trees. In contrast, in our pruning, we keep track of the entire rooted forest, rather than the single subtree that contains the root. 
It turns out that the pruning poset is a special case of the \textit{poset of planar forests} introduced by Foissy \cite{Foissy}. Fortunately, our special case has enough structure to allow for a simple and explicit Möbius function for trees of arbitrary size; this is, so far, unavailable in Foissy's larger poset. The explicit Möbius inversion formula will lead to the inclusion-exclusion principle underlying our tree probabilities. \\

On the probabilistic side, we construct an \emph{auxiliary process} with \emph{time-independent law}, from which one can read off the (marginal) law of  $(\F{t}{})^{}_{t\in \NN}$, for every given $t$, via a pathwise construction. The method is reminiscent of that used by Clifford and Sudbury \cite{CliffordSudbury}, who jointly represent \textit{all } transitions of a given Markov chain on the same probability space, provided the process is monotone and has a totally ordered state space. The state space of $(\F{t}{})^{}_{t\in \NN}$, however, is only partially ordered; we therefore need a version that works on a tree-by-tree basis. This allows one to cope with the dependence of the links and the resulting state dependence of the transitions of $(\F{t}{})^{}_{t\in \NN}$. 

The article is organised as follows. In Section~\ref{sec:fragmentation_process}, we first define the fragmentation process $(\F{t}{})^{}_{t\in \NN}$ and relate sets of realisations of $(\F{t}{})^{}_{t\in \NN}$ to certain rooted trees. We then (Section~\ref{sec:moebius_pruning_poset}) construct a pruning poset for general rooted forests, find its Möbius function, and give the corresponding Möbius inversion principle. We then use Möbius inversion on the pruning poset to obtain an expression for the tree probabilities (Section~\ref{sec:moebius_fragmentation_trees}). The final explicit expression for the tree probabilities (Section~\ref{sec:expl_tree_prob_discrete}) will rely on the auxiliary process, which is introduced in Section~\ref{sec:auxiliary_process}.


\section{Fragmentation process and fragmentation trees}\label{sec:fragmentation_process}

The state space of the fragmentation process $(\F{t}{})^{}_{t\in \NN}$ is $\Pot{L}$, where $L=\{1,\ldots,n\}$ is the set of discrete objects called links and $\F{t}{}$ denotes the set of links that have been removed until time $t$. The initial state of $(\F{t}{})^{}_{t\in \NN}$ is $\F{0}{}=\varnothing$, the absorbing state is $L$, and $\F{t'}{}\subseteq \F{t}{}$ for all $t'<t$. 
If a link $\alpha\in L$ is removed, the remaining set of links is decomposed into the contiguous fragments $\{\beta\in L: \beta<\alpha\}$ and $\{\beta\in L: \beta>\alpha\}$. If all links in $G=\{\alpha^{}_1,\ldots,\alpha^{}_{\vert G\vert}\}\subseteq L$ with
$\alpha^{}_1<\alpha^{}_2<\cdots <\alpha^{}_{\vert G\vert}$ are removed, 
$G$ induces a decomposition of the set of remaining fragments of links into
\begin{equation} \label{def:L_G}
\L{G}{}:=\big\{J^{}_1,\ldots,J^{}_{|G|+1} \big\},
\end{equation}
where
\[
J_1^{}=\{ \alpha \in L :  \alpha < \alpha_1 \}, \ \ J_2^{}=\{ \alpha \in L :   \alpha_1 < \alpha < \alpha_2 \},\ \ \ldots, \ \  J^{}_{|G|+1}=\{ \alpha \in L :   \alpha^{}_{|G|} < \alpha\};
\]
in particular, $\L{\varnothing}{} =\{L\}$ and $\L{L}{} =\{\varnothing\}$. Clearly, the $J_i$ may be empty and $\L{G}{}\setminus \{\varnothing\}$ is a partition of $L \setminus G$.

\begin{definition}[Fragmentation process] \label{def:Fragmentation_discrete}
Let $\rh{\alpha}$, $\alpha \in L$, be positive with $\sum_{\alpha \in L} \rh{\alpha}\leqslant 1$. $(\F{t}{})_{t \in \NN}^{}$ is the following discrete-time Markov chain with values in $\Pot{L}$: The initial state is $\F{0}{} = \varnothing$ and, conditional on $\F{t-1}{}=G$,
\[
\F{t}{} = \F{t-1}{} \cup \Big(\bigcup_{J \in \L{G}{}}\Af{t}{J} \Big).
\]
Here $\Af{t}{J} = \{ \alpha \}$ with probability $\rh{\alpha}$ for all $\alpha\in J$ and $\Af{t}{J} = \varnothing$ with probability                                                            $1-\sum_{\alpha\in J} \rh{\alpha}$, independently for all $J\in \L{G}{}$ and all $t\geqslant 1$, and $\L{G}{}$ is defined as in \eqref{def:L_G}.
\end{definition}

The definition deals consistently with empty fragments since $\Af{t}{\varnothing} = \varnothing$ with probability 1. 

\begin{remark}
Let us mention for completeness that Definition~\ref{def:Fragmentation_discrete} immediately entails the transition probabilities
\[
 \PP(F_t=H \mid F_{t-1}=G) = \begin{cases}  \prod_{J \in \L{G}{}} p^{}_J, & \lvert (H \setminus G) \cap J \rvert \leqslant 1 \text{ for all } J \in \L{G}{} , \\ 0, & \text{otherwise}, \end{cases}
\]
where
\[
 p^{}_J := \begin{cases} \rh{\alpha}, & (H \setminus G) \cap J = \{\alpha\} \text{ for some } \alpha \in J, \\
 1-\sum_{\alpha \in J} \rh{\alpha}, & (H \setminus G) \cap J = \varnothing. \end{cases}
\]
In what follows, however, this explicit representation will not be required.
\end{remark}

Links are dependent as long as they belong to the same fragment and become independent once they are separated on different fragments. We can therefore represent $(\F{t}{})^{}_{t\geqslant t'}$  as
\begin{equation}\label{eq:recursion_F}
\F{t}{L}=\F{t'}{L}\cup \bigg(\bigcup_{J\in \L{F_{t'}^{L}}{}} \F{t}{J}\bigg),\quad t'\geqslant 0,\ t\geqslant t'.
\end{equation}
The $(\F{t}{J})^{}_{t\geqslant t'}$'s are independent processes with $\F{t'}{J}=\varnothing$ and $(\F{t}{J}\big)^{}_{t\geqslant t'}$ defined in analogy with $(\F{t}{L})_{t\in \NN}^{}:= (\F{t}{})_{t\in \NN}^{} $. That is, $(\F{t}{J})^{}_{t\geqslant t'}$ is the fragmentation process defined on the underlying set of links $J$ with removal probabilities $\rh{\ts \alpha}{}$, $\alpha \in J$. Throughout, we use the upper index to indicate the underlying set of links and may omit it if the set is $L$.


Our interest is in the (marginal) probability distribution of $\F{t}{}$ for any given $t$. We will throughout rely on a formulation via waiting times. Let $\t{\alpha}:= \min\{t\in \NN:\alpha \in \F{t}{}\}$ be the waiting time for  link $\alpha$ to be removed and $\t{K}:= \min\{\t{\alpha}: \alpha \in K\}$ the time at which the first link in $K \subseteq L$ is removed.
The event $\{\F{t}{}=G\}$  then obviously translates into
\begin{equation}\label{eq:distribution_Ft}
\{\F{t}{}=G\} =\big\{\max\{\t{\alpha}:\alpha\in G\}\leqslant t < \t{ L\setminus G}\}\,,
\end{equation}
for every $G\subseteq L$ and $t \geqslant 0$.

Since links are not independent in general and dependencies change over time, $\P\big(\F{t}{}=G\big)$ fails to have an obvious explicit expression. As mentioned in the Introduction, interest therefore shifts to classes of realisations of the fragmentation process that  end up in the state $G$ at time $t$ and that agree on the time order of some events. Each such set will be represented by an augmented version of a rooted binary tree. This will be done next. 


Let $T=(\gamma, G, E)$ denote a plane oriented binary tree with root $\gamma$, vertex (or node) set $G=G(T)$, set of edges $E=E(T)\subseteq G\times G$ and standard partial order $\<$ on the set of vertices. 
If $\alpha$ and $\beta$ are adjacent and $\alpha\prec\beta$, we write $e=(\alpha,\beta)$ and call $\alpha$ and $\beta$ the \textit{ends} of $e$; more precisely, $\alpha$ is the \textit{lower end} and $\beta$ the \textit{upper end} of $e$. The partial order on $G$ obviously induces a partial order on $E$ (via the partial order of the upper ends, say), which we will (by slight abuse of notation) also denote by $\<$. The set of all (order) ideals of $(G,\<)$ (see for instance \cite[p.~100]{Stanley1}) will be denoted by $\Rr{T}$, that is, 
\begin{equation}\label{def:R(T)}
\Rr{T}:=\{ R\subseteq G\, \mid\, R\neq \varnothing \text{ and for every }\alpha\in R,\ \beta\<\alpha \text{ implies } \beta\in R\}.
\end{equation}

Let $\S:=\bigcup_{R\in\Rr{T}\,\cup\,\varnothing} \L{R}{}$ be the set of all (possibly empty) fragments that emerge when links are removed from $L$ in the order prescribed by $T$, where $\L{R}{}$ is defined as in \eqref{def:L_G}. Cleary, $\S$ depends on $T$, but we suppress the dependence on $T$ in the notation. A \textit{fragmentation tree} $\Ts := (\gamma, G, E, L)$ corresponding to the tree $T=(\gamma,G,E)$ is then the augmented planted plane tree constructed as follows (see Fig.~\ref{fig:fragmentation_tree} for an example): 
\begin{tightitemize}
\item Add additional lines to $T$ such that every vertex $\alpha \in G$ has exactly two lines emanating from it. We call these additional lines \textit{branches} and distinguish them from edges. More precisely, a \textit{branch} has a lower end and no upper end in the vertex set of $T$, whereas an edge always connects two vertices. 
\item Add a \textit{phantom} node $r$ to the tree. That is, $r$ is the parent of $\gamma$, but does not count as a vertex (this makes $\Ts$ a planted plane tree \cite{Drmota}). Connect $r$ and $\gamma$ by a branch.
\item Associate every line (edge or branch) with a fragment $J\in\S$ according to the following rules. Start with the line between $r$ and $\gamma$ and identify it with $\I{\gamma}=L$. Next, associate the two lines emanating from $\gamma$ with the fragment $\Ileq{\gamma}:=\{\beta\in \I{\gamma}:\beta<\gamma\}$ and $\Igeq{\gamma}:=\{\beta\in \I{\gamma}:\beta>\gamma\}$; so $\Ileq{\gamma}$ is the left and $\Igeq{\gamma}$ the right branch or edge, and $\I{\gamma}=\Ileq{\gamma}\cup\{\gamma\} \cup \Igeq{\gamma}$ as well as $\L{\{\gamma\}}{}=\{\Ileq{\gamma},\Igeq{\gamma}\}$. If $\gamma$ has a child $\alpha\in G$ ($\beta\in G$) with $\alpha<\gamma$ ($\gamma<\beta$), we set $\Ileq{\gamma}=:\I{\alpha}$  ($\Igeq{\gamma}=:\I{\beta}$) and proceed up the tree in a recursive way by identifying all remaining lines with the (possibly empty) fragments $J\in\S$ in an analogous way, starting with the lines emanating from the child(ren) of $\gamma$.
\end{tightitemize}

\begin{figure}
\setlength{\fboxsep}{4pt}
\includegraphics[width=.85\textwidth]{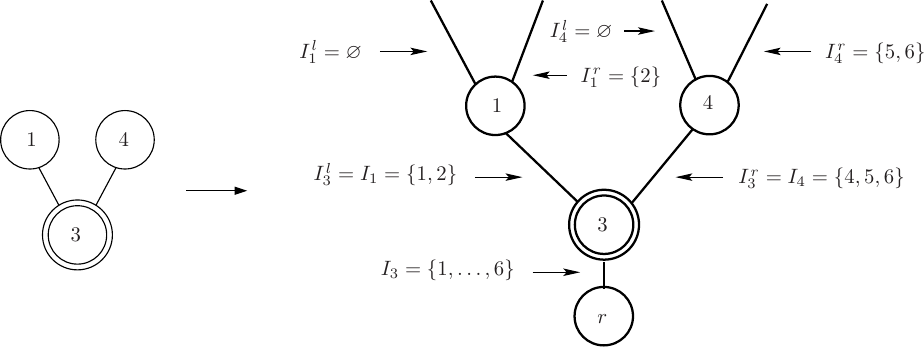}
\caption{\label{fig:fragmentation_tree}
Left: Plane oriented tree $T=(\gamma,G,E)$ with vertex set $G= \{1,3,4\}$ and root $\gamma=3$. Right: The corresponding fragmentation tree $\Ts=(\gamma,G,E,L)$ for $L=\{1,\ldots,6\}$. Here, $\S{}=\{\I{3},\I{1},\I{4},\varnothing,\Igeq{1},\Igeq{4}\}$.}
\end{figure}

For every $\alpha\in G$, the fragment $\I{\alpha}$ is the smallest fragment in $\S$ that contains $\alpha$, i.e. the specific fragment from which link $\alpha\in G$ is removed.  $\I{\alpha}$ will be understood as \textit{internal fragment}. The fragments in $\L{G}{}$, namely those that are associated with branches rather than edges, will be termed \textit{external fragments}. External fragments $J\in\L{G}{}$ can be either \textit{full} (if $J\neq \varnothing$) or \textit{empty} (if $J=\varnothing$). For $G=\varnothing$, the only fragmentation tree is the empty planted tree (with no node except the phantom node $r$ and the single line $\I{\gamma}$). 

Due to the above description, we can rewrite $\S$ in various ways, namely,
\[
\S=\bigcup_{R\in\Rr{T}\cup\varnothing} \L{R}{} = \{\I{\gamma}\}\cup\{\Ileq{\alpha}, \Igeq{\alpha}: \alpha \in G\} = \{ I^{}_\alpha : \alpha \in G \} \cup \L{G}{}\,.
\]
In a similar manner, we can write $\L{R}{}$,  $R\in\Rr{T}$, as a collection of external and internal fragments and  internal fragments, namely
\begin{equation}\label{eq:props_I_alpha}
\L{R}{}=\Big(\L{G}{}\setminus \Big(\bigcup_{\alpha\in M(G\setminus R)} \L{\I{\alpha}\cap G}{\I{\alpha}}\Big)\Big) \cup \big\{ \I{\alpha}:\alpha \in M(G\setminus R)\big \},
\end{equation}
where  $\L{R}{\I{\alpha}}$ is defined as in \eqref{def:L_G}  with $L$ replaced by $\I{\alpha}$, $\alpha \in G$, and $M(G\setminus R)$ is the set of vertices in $G\setminus R$ that are minimal with respect to $\<$ (with $M(\varnothing):=\varnothing$). 

\begin{remark} \label{remark:tree_notation}
Our fragmentation trees correspond to the \emph{tree topologies} that occurred in \cite{BaakeWangenheim}. In the genealogical context, a tree topology  means an unweighted tree. We slightly adjusted the notation here for compatibility with the general usage in graph theory. 
\end{remark}

We now match realisations of the fragmentation process with fragmentation trees.  Recall that $\t{\alpha}$ is the waiting time until link $\alpha$ is removed and that $\t{K}$, $K\subseteq L$, is the waiting time until the first link in $K$ is removed. 

\begin{definition}
For a given $t\in \NN$, we say that $(\F{t'}{})^{}_{0\leqslant t' \leqslant t}$ \emph{matches} the fragmentation tree $\Ts=(\gamma,G,E,L)$ if $\F{t}{}=G$ and $\t{\alpha} \leqslant \t{\beta}$ precisely for those $\alpha, \beta \in G$ with $\alpha \< \beta$; in words, if the partial order of the waiting times with respect to links on the same path away from the root agrees with the partial order of the vertices of the fragmentation tree. That is,  we do not care about the order of the waiting times for links that are already separated on different fragments. 
\end{definition}

\begin{figure}[hbtp]
\centering
\setlength{\fboxsep}{5pt}
\includegraphics[width=.9\textwidth]{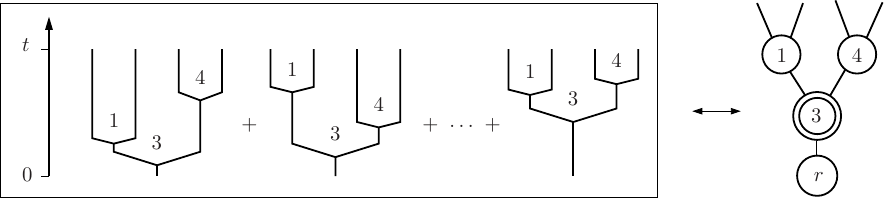}
\caption{\label{fig:fragmentation_process2_on_trees} All realisations of the fragmentation process that end up in the state $\F{t}{}=\{1,3,4\}$ at time $t$ and for which $3$ is removed before $1$ and $4$ match the fragmentation tree on the right.}
\end{figure}
 
Let now $\tau(G,L)$ be the set of all fragmentation trees with vertex set $G$ and underlying link set $L$ (the cardinality of this set is the Catalan number $\textstyle C^{}_{|G|}=\frac{1}{|G|+1} \binom{2|G|}{|G|}$) and define $\G{\alpha}{\varnothing}:=\{\beta\in G \,\mid\, \beta\> \alpha\}$ for every $\alpha\in G$. We can then expand \eqref{eq:distribution_Ft} into
\[
\{\F{t}{}=G\}= \bigcup_{\Ts \in \tau(G,L)} \{\Ftree\}, \quad G\subseteq L,
\]
where
\begin{equation}\label{def:tree_event_tau}
\{\Ftree\}= \big\{\max\{\t{\alpha}:\alpha\in G\}\leqslant t,\, t < \t{L\setminus G},\,\t{\alpha}=\t{I_{\alpha} \cap G} \text{ for all }  \alpha\in G \big\}
\end{equation}
is the event that $(\F{t}{})^{}_{0\leqslant t' \leqslant t}$ \emph{matches} $\Ts$. Indeed,
the \textit{inequalities} in \eqref{def:tree_event_tau} ensure that precisely the vertices in $G$ have been removed before $t$. The \textit{equalities} then enforce the partial order  within the tree  by requiring  that  $\alpha$ be the first link to be removed in the subtree with root $\alpha$ (which has vertex set $\I{\alpha}\cap G$); it is sufficient to look at the links in $\I{\alpha}\cap G$ since we know from the inequalities that those in $\I{\alpha} \setminus G$  are not cut until $t$ anyway. 

The task for the following sections is to find an explicit expression for the (marginal) probabilities $\P(\Ftree)$. To this end, we will first construct a pruning poset on general rooted forests, find its M\"obius function, apply it to our fragmentation trees, and use M\"obius inversion to write the maximum in \eqref{def:tree_event_tau} in terms of minima over certain subsets of $G$.  In Section~\ref{sec:expl_tree_prob_discrete} we will then give an explicit expression for the minima by use of an auxiliary process that is constructed in Section~\ref{sec:auxiliary_process}. 

\section{Möbius inversion on a poset of rooted forests} \label{sec:moebius_pruning_poset}

Let $T=(\gamma,G,E)$ be a general rooted tree and note that, if not stated otherwise, all definitions and properties of this section carry over to fragmentation trees $\Ts$. For a given subset $H$ of $E$, we denote by $T-H$ the \textit{rooted forest} obtained from $T$ by deleting all edges $e\in H$; we speak of these edges as \textit{cut edges} (see Figure \ref{fig:pruned_edges}). The remaining \textit{connected components} (or \textit{components}) of $T$ are disjoint rooted trees, where the root in each component is the unique vertex that is minimal with respect to $\<$. For all $\alpha \in G$, we now denote by $\Tp{\alpha}{H}$ the subtree in $T-H$ that consists of $\alpha$ and all its descendants, see Figure~\ref{fig:subtrees}.  By slight abuse of notation, we abbreviate the corresponding vertex and edge sets by $\G{\alpha}{H}:=G(\Tp{\alpha}{H})$ and $E_\alpha(H):=E(\Tp{\alpha}{H})$, respectively; note that  $\G{\alpha}{\varnothing} = \I{\alpha} \cap G$. The rooted (fragmentation) forest $T-H$ then is the disjoint collection of all $\Tp{\alpha}{H}$ with $\alpha=\gamma$ or $\alpha$ an upper end of some $e\in H$ (c.f. Figure~\ref{fig:rooted_forest}). 

Note that, for every $H\subseteq E$ and $\alpha\in G$, the fragmentation $\Tps{\alpha}{H}$ has phantom node  $r_{\alpha}$ (where we set $r_\gamma:=r$ for consistency) and contains information about the fragments in $\S_{}^{\I{\alpha}}:=\bigcup_{R\in\Rr{\Tp{\alpha}{H}}\cup\varnothing} \L{R}{\I{\alpha}}$. 

For a given forest $T-H$, a special role is played by the subtree $\Tp{\gamma}{H}$, whose root coincides with the root of $T$. We call this tree the \textit{stump tree} of the rooted forest and say its vertex set $\G{\gamma}{H}$ is the \textit{stump set}. Obviously, the set of all \textit{stump sets} coincides with $\Rr{T}$ from \eqref{def:R(T)}. From now on, we will therefore speak of $\Rr{T}$ as the set of all stump sets.

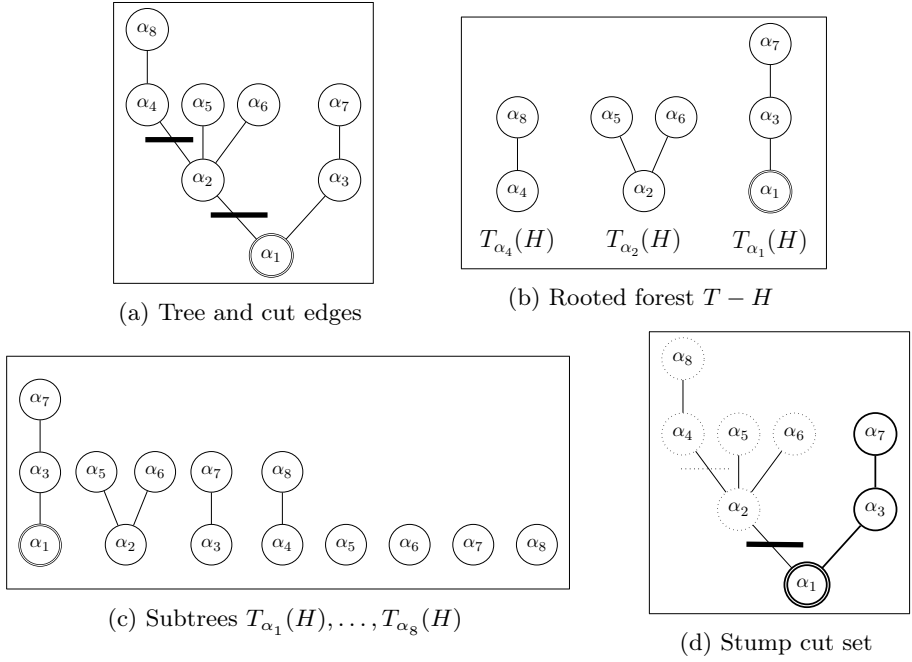
\begin{figure}[ht]
\centering
\begin{subfigure}[B]{.27\textwidth}
\centering
\resizebox{.85\linewidth}{!}{
\fbox{
\begin{tikzpicture}[grow'=up,
 state/.style={circle, draw, minimum width=.82cm, fill=white, minimum height=.7cm}
 ,level distance=1.5cm, text centered]3
\tikzstyle{leer}=[circle, minimum height=.7cm] 
\node (x) [leer] {\textcolor{white}{$\alpha_1$}}
child { [sibling distance=2.7cm]
 node (a1) [state, below=2cm, accepting] {$\alpha_1$} edge from parent[draw=none]
    child{ [sibling distance=1.1cm]
        node (a2) [state] {$\alpha_2$}
        	child{  node (a4) [state] {$\alpha_4$}
				child { node (a8) [state] {$\alpha_8$}}    		
        		}
        	child{ node (a5) [state] {$\alpha_5$}}
        	child{ node (a6) [state] {$\alpha_6$}
        	}
        	}
    child{
    	node (a3) [state] {$\alpha_3$}
    	    child{ node (a7) [state] {$\alpha_7$}}    	
        }
        }
        ;
\draw[line width=3pt] ([yshift=-.7em,xshift=-.5em]a5.south) -- ([yshift=-.7em,xshift=-.1em]a4.south);
\draw[line width=3pt] ([yshift=-.7em,xshift=.4em]a2.south) -- ([yshift=-.7em,xshift=-3.7em]a3.south);
\end{tikzpicture}
}}
\caption{\label{fig:pruned_edges}Tree and cut edges}
\end{subfigure}
\hspace{.5cm}
%
%
\begin{subfigure}{.32\textwidth}
\resizebox{\linewidth}{!}{
\fbox{
\begin{tikzpicture}[grow'=up,
 state/.style={circle, draw, minimum width=.82cm, fill=white, minimum height=.7cm}
 ,level distance=1.5cm, text centered]
\tikzstyle{leer}=[circle, minimum height=.7cm] 
\tikzstyle{l}=[rectangle, minimum height=.1cm] 
\node (z2)[leer] [] {\textcolor{white}{$\alpha_1$}}
child{ [sibling distance=1.3cm] node (c4) [state, below=3cm] {$\alpha_4$} edge from parent[draw=none]
				child { node (c8) [state] {$\alpha_8$}}    		
        		};
\node (T4) [l] [below=.2cm of c4] {\Large $\Tp{\alpha_4}{H}$};

 \node (z1)[leer] [right=1.7cm of z2] {\textcolor{white}{$\alpha_1$}}
child{ [sibling distance=1.3cm] node (c2) [state, below=3cm] {$\alpha_2$} edge from parent[draw=none]
				child { node (c5) [state] {$\alpha_5$}}
				child { node (c6) [state] {$\alpha_6$}	
				}     		
        		};
\node (T2) [l] [below=.2cm of c2] {\Large $\Tp{\alpha_2}{H}$};        		   		
\node (z) [leer] [right=1.7cm of z1]{\textcolor{white}{$\alpha_1$}}
child {
 node (c1) [state, below=3cm, accepting] {$\alpha_1$} edge from parent[draw=none]
    child{
    	node (c3) [state] {$\alpha_3$}
    	    child{ node (c7) [state] {$\alpha_7$}}    	
        }
        };
\node (T1) [l] [below=.2cm of c1] {\Large $\Tp{\alpha_1}{H}$}; 
\end{tikzpicture}
}}
\caption{\label{fig:rooted_forest}Rooted forest $T-H$}
\end{subfigure}
\\
\begin{subfigure}[B]{.49\textwidth}
\resizebox{\linewidth}{!}{
\fbox{
\begin{tikzpicture}[grow'=up,
 state/.style={circle, draw, minimum width=.82cm, fill=white, minimum height=.7cm}
 ,level distance=1.5cm, text centered]
\tikzstyle{leer}=[circle, minimum height=.7cm] 
\tikzstyle{l}=[rectangle, minimum height=.1cm] 
\node (x1) [leer] {\textcolor{white}{$\alpha_1$}}
child { [sibling distance=1cm]
 node (a1) [state, below=2cm,accepting] {$\alpha_1$}
 edge from parent[draw=none]
				child { node [state] {$\alpha_3$}
				child { node (t) [state] {$\alpha_7$}
				}};    		
       };
\node (x2) [leer, right=.9cm of x1] {\textcolor{white}{$\alpha_1$}}
child { [sibling distance=1.2cm]
 node  (a2) [state, below=2cm] {$\alpha_2$}
 edge from parent[draw=none]
				child { node  [state] {$\alpha_5$}} 
				child { node  [state] {$\alpha_6$}}     		
       };
\node (x3) [leer, right=.9cm of x2] {\textcolor{white}{$\alpha_1$}}
child { [sibling distance=1cm]
 node (a3) [state, below=2cm] {$\alpha_3$}
 edge from parent[draw=none]
				child { node [state] {$\alpha_7$}}    		
       };
\node (x4) [leer, right=.6cm of x3] {\textcolor{white}{$\alpha_1$}}
child { [sibling distance=1cm]
 node (a4) [state, below=2cm] {$\alpha_4$}
 edge from parent[draw=none]
				child { node [state] {$\alpha_8$}}    		
       };
\node (x5) [leer, right=.45 of x4] {\textcolor{white}{$\alpha_1$}}
	child { [sibling distance=1cm]
	 node  (a5) [state, below=2cm] {$\alpha_5$}
 	 edge from parent[draw=none]
 	 };
\node (x6) [leer, right=.45 of x5] {\textcolor{white}{$\alpha_1$}}
	child { [sibling distance=1cm]
	 node  (a6) [state, below=2cm] {$\alpha_6$}
 	 edge from parent[draw=none]
 	 };
\node (x7) [leer, right=.45cm of x6] {\textcolor{white}{$\alpha_1$}}
	child { [sibling distance=1cm]
	 node (a7) [state, below=2cm] {$\alpha_7$}
 	 edge from parent[draw=none]
 	 };
\node (x8) [leer, right=.45cm of x7] {\textcolor{white}{$\alpha_1$}}
	child { [sibling distance=1cm]
	 node (a8) [state, below=2cm] {$\alpha_8$}
 	 edge from parent[draw=none]
 	 };
\node [above=.09cm of t] {}
;
\node [below=.09cm of a1] {}
;
\end{tikzpicture}
}}
\caption{\label{fig:subtrees}Subtrees $\Tp{\alpha_1}{H},\ldots ,\Tp{\alpha_8}{H}$}
\end{subfigure}
\hspace{.7cm}
\begin{subfigure}{.23\textwidth}
\resizebox{\linewidth}{!}{
\fbox{
\begin{tikzpicture}[grow'=up,
 state/.style={circle, draw, minimum width=.82cm, fill=white, minimum height=.7cm}
 ,level distance=1.5cm, text centered, 
 emph/.style={edge from parent/.style={dotted,draw}},
 fat/.style={edge from parent/.style={draw, thick}}]
\tikzstyle{leer}=[circle, minimum height=.7cm] 
\node (x) [leer] {\textcolor{white}{$\alpha_1$}}
child [line width=.3pt] { [sibling distance=2.7cm]
 node (a1) [state, below=2cm, accepting, line width=1pt] {$\alpha_1$} edge from parent[draw=none]
    child [line width=.3pt]{ [sibling distance=1.1cm]
        node (a2) [state, dotted] {$\alpha_2$}
        	child{  node (a4) [state, dotted] {$\alpha_4$}
				child { node (a8) [state, dotted] {$\alpha_8$}}   		
        		}
        	child{ node (a5) [state, dotted] {$\alpha_5$}}
        	child{ node (a6) [state, dotted] {$\alpha_6$}
        	}
        	}
    child [line width=1pt]{
    	node (a3) [state] {$\alpha_3$}
    	    child [line width=1pt] { node (a7) [state] {$\alpha_7$}}    	
        }
        }
        ;
\draw[dotted] ([yshift=-.7em,xshift=-.5em]a5.south) -- ([yshift=-.7em,xshift=-.1em]a4.south);
\draw[line width=3pt] ([yshift=-.7em,xshift=.4em]a2.south) -- ([yshift=-.7em,xshift=-3.7em]a3.south);
\end{tikzpicture}
}}
\caption{Stump cut set\label{fig:stump_set}}
\end{subfigure}
\caption{\label{fig:tree_subtrees}(a) Tree $T=(\gamma,G,E)$ with root $\gamma=\alpha_1$, vertex set $G=\{\alpha_1,\ldots,\alpha_{8}\}$, and set of pruning edges $H=\{(\alpha_1,\alpha_2),(\alpha_2,\alpha_4)\}$. (b) The forest $T-H$ obtained from the tree in (a). The stump tree is $\Tp{\alpha_1}{H}$ with $R=\G{\gamma}{H}=\{\alpha_1,\alpha_3,\alpha_7\}$. The root of the stump tree is indicated by a double circle since it coincides with the root of $T$. (c) Collection of all subtrees $\Tp{\alpha}{H}$, $\alpha\in G$, in the forest $T-H$; $T$ and $H$ from (a). (d) The stump cut set for the stump set in (b) is $\partial(R)=\{(\alpha_1,\alpha_2)\}$. The stump set and the stump cut set are indicated in bold.
}
\end{figure}

Any stump set may be defined via a special set of cut edges. For a given $R \in \Rr{T}$, we denote by $\partial(R)$ the set of edges that separates   $R$ from the remaining set of vertices $G\setminus R$ and call it the \textit{stump cut set} of $R$, compare Figure~\ref{fig:stump_set}. Explicitly,
\begin{equation*}
\partial(R):=\{(\alpha,\beta)\in E: \alpha\in R,\, \beta\in G\setminus R\};
\end{equation*}
in particular, $\partial(G) = \varnothing$. The set of all stump cut sets is
\begin{equation}\label{def:stump_cut_set}
\Cc{T} := \{ \partial(R): R \in \Rr{T}\}.
\end{equation}
Obviously, $\Cc{T}$ is the set of all antichains of the poset $(E,\<)$, that is, 
\begin{equation}\label{antichain}
\Cc{T}=\{H\subseteq E \mid\text{ for all } e_1,e_2\in H,  e_1 \neq e_2, \text{ one has } e_1\not\preccurlyeq e_2 \text{ and } e_2  \not\preccurlyeq e_1\}.
\end{equation}
We can thus rewrite \eqref{def:R(T)} as
\begin{equation}\label{eq:root_in_G_gamma}
\Rr{T}=\{\G{\gamma}{C} : C\in \Cc{T}\}.
\end{equation}

%
%

%

\begin{fact} \label{fact:subtrees}
For every $H\subseteq E$, the components of $T-H$  have the following properties: 
\begin{enumerate}[label=(\Alph*)]
\item \label{fact:subtrees_3}$\big(\Tp{\gamma}{H}\big)_{\alpha}(K)=\Tp{\alpha}{H\cup K}$, $\alpha\in \G{\gamma}{H}$, $K\subseteq E_\gamma(H)$
\item \label{fact:subtrees_2}$\Tp{\alpha}{H}=\Tp{\alpha}{H\cup C}$ for $C\in\Cc{\Tp{\gamma}{H}}$ and $\alpha\notin \G{\gamma}{H\cup C}$.
\end{enumerate}
These properties carry over to the corresponding vertex sets of the rooted trees.
\end{fact}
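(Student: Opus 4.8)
The plan is to reduce everything to vertex sets: each component of $T-H$ is just the subtree of $T$ induced on its own vertex set, so a component is determined by its vertices, and it suffices to prove the two identities for the sets $\V{\cdot}{\cdot}$, the edge-set and tree statements then being immediate. The only tool I need is the characterisation generalising \eqref{eq:stumpset}, namely that for any $\alpha\in V$ and $H\subseteq E$,
\begin{equation*}
\V{\alpha}{H}=\{\nu\in V:\alpha\<\nu \text{ and no edge of }H\text{ lies on the }\alpha\text{-}\nu\text{-path in }T\},
\end{equation*}
which holds because a descendant $\nu$ of $\alpha$ remains in the component of $\alpha$ in $T-H$ exactly when the unique $\alpha$-$\nu$-path in $T$ survives the deletion of $H$.

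For \ref{fact:subtrees_3} I fix $\alpha\in\V{\gamma}{H}$ and $K\subseteq E_\gamma(H)$ and compare the vertex set of $\big(\Tp{\gamma}{H}\big)_{\alpha}(K)$ with that of $\Tp{\alpha}{H\cup K}$ directly. If $\nu$ lies in either set then $\alpha,\nu\in\V{\gamma}{H}$, so the $\alpha$-$\nu$-path inside the stump tree $\Tp{\gamma}{H}$ coincides with the $\alpha$-$\nu$-path in $T$; membership of $\nu$ in $\big(\Tp{\gamma}{H}\big)_{\alpha}(K)$ thus means $\nu\in\V{\gamma}{H}$ together with the absence of any $K$-edge on that path. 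Because $\alpha\in\V{\gamma}{H}$, the $\gamma$-$\nu$-path splits as the $\gamma$-$\alpha$-path followed by the $\alpha$-$\nu$-path, and the former carries no $H$-edge; hence the condition $\nu\in\V{\gamma}{H}$ is equivalent to the absence of any $H$-edge on the $\alpha$-$\nu$-path. Conjoining the $K$-condition turns this into the absence of any $(H\cup K)$-edge on the $\alpha$-$\nu$-path, which is exactly the defining condition for $\V{\alpha}{H\cup K}$, so the two sets agree.

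For \ref{fact:subtrees_2} I fix an antichain $C\in\Cc{\Tp{\gamma}{H}}$ of edges in $E_\gamma(H)$ and a vertex $\alpha\notin\V{\gamma}{H\cup C}$. Since $\V{\alpha}{H\cup C}\subseteq\V{\alpha}{H}$ is automatic, it suffices to show that no edge of $C$ lies in $\Tp{\alpha}{H}$, for then every $\alpha$-$\nu$-path avoiding $H$ also avoids $C$ and the two vertex sets coincide. I split on the location of $\alpha$. If $\alpha\notin\V{\gamma}{H}$, then $\Tp{\alpha}{H}$ lies in the component of $\alpha$ in $T-H$, which differs from the stump tree and is therefore vertex-disjoint from it; hence $\Tp{\alpha}{H}$ contains no vertex of $\V{\gamma}{H}$ and a fortiori no edge of $C\subseteq E_\gamma(H)$. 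If instead $\alpha\in\V{\gamma}{H}$, then $\alpha\notin\V{\gamma}{H\cup C}$ forces some edge $e^{*}=(\mu,\eta)\in C$ onto the $\gamma$-$\alpha$-path, so its upper end satisfies $\eta\<\alpha$; were some $c=(\mu',\eta')\in C$ to lie in $\Tp{\alpha}{H}$, we would have $\alpha\<\mu'$ and hence $\eta\<\alpha\<\mu'\prec\eta'$, giving $e^{*}\prec c$ and contradicting that $C$ is an antichain. Either way $C$ meets no edge of $\Tp{\alpha}{H}$, whence $\V{\alpha}{H\cup C}=\V{\alpha}{H}$.

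The path-decomposition bookkeeping in \ref{fact:subtrees_3} and the trivial inclusion in \ref{fact:subtrees_2} are routine; the one genuinely load-bearing step is the antichain contradiction in the second case of \ref{fact:subtrees_2}. This is precisely where the hypothesis $C\in\Cc{\Tp{\gamma}{H}}$ enters (an arbitrary edge set would fail), and the care required is to see that a single edge of $C$ separating $\alpha$ from the root already forbids, by incomparability, any further edge of $C$ inside $\alpha$'s subtree.
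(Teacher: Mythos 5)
Your proof is correct and takes essentially the same route as the paper's: part \ref{fact:subtrees_3} is the statement that successive edge deletions commute (which you verify concretely through path membership instead of citing it as a general principle), and part \ref{fact:subtrees_2} rests, exactly as in the paper, on the antichain property of stump cut sets (Fact~\ref{fact:C(T)}) to keep the edges of $C$ out of the subtree below $\alpha$. The only divergence is one of detail: your two-case argument for \ref{fact:subtrees_2} makes explicit that the case $\alpha\notin\V{\gamma}{H}$ is handled by component disjointness (the ideal property of stump sets) rather than by the antichain property, a distinction the paper's one-line appeal to Fact~\ref{fact:C(T)} leaves implicit.
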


\begin{proof}
\ref{fact:subtrees_3} is due to a general property of graph decomposition  via recursive edge deletion: the order in which edges are deleted does not affect the final object.
So $(T-H)(K) = T-(H \cup K)$ for all $H,K \subseteq E$; in particular, the stump tree is the same in both cases. 
\ref{fact:subtrees_2} For every $C\in\Cc{\Tp{\gamma}{H}}$ and $\alpha\notin \G{\gamma}{H\cup C}$, we have $C \cap E_\alpha(\varnothing)=\varnothing$ due to the antichain property \eqref{antichain} of $\Cc{T}$.
But a subtree $\Tp{\alpha}{H}$ is not affected by  deletion of an edge $e\notin E_\alpha(\varnothing)$.
\end{proof}

\subsection{Pruning Poset}\label{sec:pruning_poset}

From now on, let $T=(\gamma,G,E)$ be fixed and let us investigate the set of all subsets of edges of $T$, denoted by $\Pot{E}$. We introduce a partial order $\pD$ on $\Pot{E}$ and say that $H\pD K$ for any two sets of cut edges $H,K\subseteq E$ when $H=K\cup A$  with $A \subseteq E_\gamma(K)$. In words, $H\pD K$ whenever the additional cuts in $H\setminus K$ occur in the stump tree of the rooted forest $T-K$. 
The set $\Pot{E}$ along with the partial order $\pD$ constitutes a poset $\PP(T):=(\Pot{E},\pD)$. Since the cut edges prune the tree (in an intuitive way of thinking), we call $\PP(T)$ the \textit{pruning poset} of $T$. A specific example with corresponding Hasse diagram is shown in Figure~\ref{fig:poset_tree_decompositions}. For every $K\subseteq E$, we clearly have the isomorphic relation
\begin{equation}\label{eq:isomorphism_D}
(\{H: H\pD K\}, \pD ) \simeq \PP(\Tp{\gamma}{K}).
\end{equation}

$\PP(T)$ has a maximal element $\varnothing$, which means that $H\pD \varnothing$ for all $H\subseteq E$; but in general no minimal element $\mathbf 0$, with $\mathbf 0\pD H$ for all $H\subseteq E$. 
As a consequence, $\PP(T)$ is, in general, not a lattice.
Nonetheless, every embedded subposet or \emph{interval}
\[
[H,K]_P:=\big(\{I\subseteq E : H\! \pD \! I\! \pD K\},\pD\! \big),\quad H\pD K
\]
of $\PP(T)$ is a lattice. We omit the subscript in what follows. Due to   \eqref{eq:isomorphism_D}, we conclude the isomorphic relation $[H,K]\simeq [H\setminus K, \varnothing]$ for any $H\pD K$. It is therefore sufficient to investigate the properties of $[H,\varnothing]$ for every $H\subseteq E$. The interval $[H,\varnothing]$ obviously has maximal element $\varnothing$ and minimal element $H$. Every path (top to bottom) in $[H,\varnothing]$ represents the possibility to add elements from $H$ in nonincreasing order with respect to $\<$.

\begin{figure}
\resizebox{\textwidth}{!}{
\begin{tikzpicture}[node distance= .5cm, bend angle=0,auto]
\tikzstyle{circ}=[draw, circle, left color=white, minimum width=.1cm,  minimum height=.1cm]
\tikzstyle{leer}=[circle, minimum height=.1cm]
\tikzstyle{rec}=[draw, rectangle, black, left color=white, minimum width=.5cm]
\tikzstyle{recl}=[draw, rectangle, black, left color=white, minimum width=.5cm]
\tikzstyle{ll}=[line width=1.5pt]
\node [circ, accepting] (g) {$\,$};
\node [leer] (y0) [above = .7cm of g] {$\,$};
\node [circ] (a4) [right =.3cm of y0] {$\,$}
edge node[right=.2cm, pos=0.6]{$e_4$} (g); 
\node [circ] (a3) [left=1.1cm of a4] {$\,$}
edge node[left=.2cm,pos=0.6]{$e_3$} (g);
\node [leer] (y1) [above = .7cm of a3] {$\,$};
\node [circ] (a2) [right =.3cm of y1] {$\,$}
edge node[right=.2cm, pos=0.6]{$e_2$} (a3); 
\node [circ] (a1) [left=1.1cm of a2] {$\,$}
edge node [left]{$e_1$} (a3);
\node [leer] (p) [above= 2.1cm of a4] {$\,$};
\node [recl] (0) [right=8cm of p] {$\varnothing$};
\node [leer] (x0) [below=.8cm of 0] {};
\node [recl] (2) [left =1cm of x0] {$e_2$}
edge [ll] node{} (0);
\node [recl] (1) [left=1cm of 2] {$e_1$}
edge [ll] node{} (0);
\node [recl] (4) [right=1cm of x0] {$e_4$}
edge [ll] node{} (0);
\node [rec] (3) [right=1cm of 4] {$e_3$}
edge node{} (0);
\node [leer] (x1) [below=1cm of x0] {};
\node [recl] (14) [left =.3cm of x1] {$e_1,e_4$}
edge [ll] node{} (1)
edge [ll] node{} (4);
\node [recl] (12) [left=1cm of 14] {$e_1,e_2$}
edge [ll] node{} (1)
edge [ll] node{} (2);
\node [rec] (13) [left=1cm of 12] {$e_1,e_3$}
edge node{} (1);
\node [rec] (23) [right=.3cm of x1] {$e_2,e_3$}
edge node{} (2);
\node [recl] (24) [right=1cm of 23] {$e_2,e_4$}
edge [ll] node{} (2)
edge [ll] node{} (4);
\node [rec] (34) [right=1cm of 24] {$e_3,e_4$}
edge node{} (3)
edge node{} (4);
\node [leer] (x2) [below=1cm of x1] {};
\node [recl] (123) [left =.7cm of x2] {$e_1,e_2,e_3$}
edge [ll] node{} (12);
\node [rec] (134) [left=1cm of 123] {$e_1,e_3,e_4$}
edge node{} (13)
edge node{} (14);
\node [recl] (124) [right=.7cm of x2] {$e_1,e_2,e_4$}
edge [ll] node{} (12)
edge [ll] node{} (24)
edge [ll] node{} (14);
\node [rec] (234) [right=1cm of 124] {$e_2,e_3,e_4$}
edge node{} (23)
edge node{} (24);
\node [recl] (1234) [below=.8cm of x2] {$e_1,e_2,e_3,e_4$}
edge [ll] node{} (124)
edge [ll] node{} (123);
\draw[thick,<->,>=stealth] ([yshift=0.5em,xshift=2em]a4.south) -- ([yshift=.9em,xshift=-3.5em]13.south); 
\end{tikzpicture}
}
\caption{\label{fig:poset_tree_decompositions}Left: Tree $T$ with edges $\{e_1,\ldots,e_4\}$. Right: Hasse diagram for the pruning poset $\PP(T)$ of $T$. The subposet $[H,\varnothing]$ is indicated in bold.}
\end{figure}
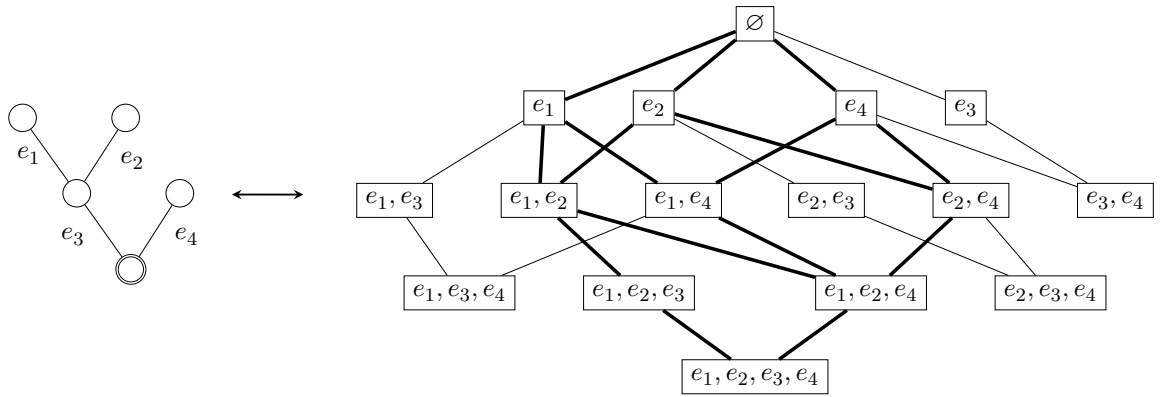

\begin{fact}\label{remark:poset_ideals}
It is clear by construction that every $I\in[H,\varnothing]$ is an ideal of $(H,\>)$, the poset $(H,\>)$ of edges with reversed partial order. Since the elements in $[H,\varnothing]$ are, in addition, ordered by reversed set inclusion, one has 
\begin{equation*}\label{eq:poset_of_ideals}
[H,\varnothing]=\big(\{I: I \text{ ideal of }(H,\>)\},\supseteq\big),
\end{equation*}
where $\supseteq$ denotes reversed set inclusion.
\end{fact}

We can now state the Möbius function for the pruning poset, see \cite[Chap.~4]{Aigner} or \cite[Chap.~3]{Stanley1} for an introduction into the topic.

\begin{proposition}\label{thm:Moebius_function_D}
For a given tree $T=(\gamma,G,E)$, the M\"obius function  for the pruning poset $\PP(T)$ is, for every $H,K\subseteq E$ with $H\pD K$, given by
\begin{equation}\label{eq:Moebius_function_D}
\mu(H,K)=
\begin{cases}
(-1)_{}^{|H|-|K|}, &\text{ if } H\setminus K\in \Cc{\Tp{\gamma}{K}},\\
0, &\text{otherwise}.
\end{cases}
\end{equation}
\end{proposition}

\begin{proof}
Due to the  representation of $[H,\varnothing]$ as an ideal in Fact~\ref{remark:poset_ideals} and the antichain property \eqref{antichain} of stump cut sets, the claim follows from a very general result for M\"obius functions of lattices of the form $(\{I: I \text{ ideal of } P\},\subseteq)$, $P$ any poset; see Example~3.9.6 in \cite{Stanley1}. 
\end{proof}

Now that we have an explicit expression for the M\"obius function, we can use M\"obius inversion (see \cite{Rota} or \cite[Prop.~4.18]{Aigner}) on $\PP(T)$, which for any two functions $f,g\colon \Pot{E}\to \mathbb R$ and any subset $K\subseteq E$ reads
\begin{equation}\label{eq:Moebius_inversion_D(T)}
g(K)=\sum\limits_{H\pD K} f(H) \ \Leftrightarrow \ f(K)= \sum\limits_{H\pD K}  \mu(H,K)\ g(H). 
\end{equation}

So far, we focussed on the set of all possible subsets of edges of a given tree $T=(\gamma,G,E)$. Let us now shift the perspective to the set of all rooted forests that can be obtained from $T$ by edge deletion. Obviously, there is a one-to-one correspondence between the elements of $\{T-H:H\subseteq E\}$ and those of $\Pot{E}$. We may thus define the poset $\mathbb F(T):=(\{T-H:H\subseteq E\},\pF)$ by specifying that $T-H\pF T-K$ precisely if $H\pD K$ for $H,K\subseteq E$. It is then clear that $\mathbb F(T)$ is isomorphic to $\PP(T)$ by construction. All properties of $\PP(T)$, such as isomorphism, the Möbius function in \eqref{eq:Moebius_function_D}, as well as the Möbius inversion formula in \eqref{eq:Moebius_inversion_D(T)} therefore carry over to $\mathbb F(T)$. 

As mentioned in the Introduction, the poset $\mathbb F(T)$ is a special case of the \textit{poset of planar forests} introduced by Foissy \cite{Foissy}, restricted to what he calls transformations of the second kind and applied to the stump tree only. Foissy also uses M\"obius inversion on his more general poset of planar forests. He calculates the M\"obius function for small examples, but does not give a general formula. Fortunately, our special case has enough structure to allow for a simple, general and explicit result. This will be the key to  an explicit expression for the tree probabilities in the context of the fragmentation process, which we consider next.

\section{Tree probabilities via Möbius inversion} \label{sec:moebius_fragmentation_trees}

Consider a fragmentation tree $\Ts=(\gamma,G, E,L)$. Let $\Gamma:= G\cup \S$  and assign to every element $s \in \Gamma$  some event (in the sense of a finite  set) $\Ev(s)$. We will throughout abbreviate $\Ev(\{\alpha\})=:\Ev(\alpha)$ and $\textstyle \bigcup_{J\in \L{\G{\alpha}{H}}{\I{\alpha}}} \Ev(J)=:\Ev\big(\L{\G{\alpha}{H}}{\I{\alpha}}\big)$ for  $\alpha\in G$, $H \subseteq E$. At this point, we neither give a meaning nor a law to the events, but will assume that the events are \textit{nested} according to the set structure, i.e., that 
\begin{equation} \label{cond:nested_events}
\Ev(s_1) \subseteq \Ev(s_2) \text{ if and only if } s_1 \subseteq s_2 \subseteq \Gamma.
\end{equation}
Note that in general $\Ev(s_1)\cup \Ev(s_2)\neq \Ev(s_1 \cup s_2)$, in particular $\Ev(I'_\alpha) \cup \Ev(\alpha) \cup \Ev(I''_\alpha)  \subseteq \Ev(I^{}_\alpha)$, but equality need not hold.
Let  $\Xi$ be the set generated from $\{\Ev(s): s \in \Gamma\}$ by arbitrary unions and set exclusions. The event $\Ev(\I{\alpha})\setminus \Ev\big(\L{\G{\alpha}{H}}{\I{\alpha}}\big)$ will often be required. Let us state the following fact.

\begin{fact}\label{fact:nested_events}
For events nested according to \eqref{cond:nested_events} we have $\Ev(\L{A}{})\subseteq \Ev(\L{G}{})$ for $G\subseteq A\subseteq L$. Moreover, for every $\alpha\in G$, $H\subseteq E$, the following properties hold:
\begin{enumerate}[label=(\Alph*)]
\item \label{fact:nested_events_1} $ \Ev\big(\L{\G{\alpha}{H}}{\I{\alpha}}\big)\subseteq \Ev\big(\L{\G{\alpha}{H\cup K}}{\I{\alpha}}\big)\subseteq \Ev\big(\L{\varnothing}{\I{\alpha}}\big)=\Ev(\I{\alpha})$ for $K\subseteq E$.
\item \label{fact:nested_events_2} $\Ev(\beta) \subseteq \Ev(\I{\alpha})\setminus \Ev\big(\L{\G{\alpha}{H}}{\I{\alpha}}\big)$ for all $\beta\in \G{\alpha}{H}$.
\item \label{fact:nested_events_3} $\Ev\big(\L{\G{\beta}{H}}{\I{\beta}}\big)\subseteq \Ev\big(\L{\G{\alpha}{H}}{\I{\alpha}}\big)$ for all $\beta\in \G{\alpha}{H}$.
\end{enumerate}
\end{fact}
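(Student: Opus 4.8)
The plan is to reduce every assertion to an inclusion (or a disjointness) between the underlying index sets in $\Gamma$, and then transport it to the events through the nested structure \eqref{cond:nested_events}. The workhorse is the elementary combinatorial observation that removing more links yields a finer partition: if $G\subseteq A\subseteq L$, then every fragment $J'\in\L{A}{}$ is contained in a unique fragment $J\in\L{G}{}$, since $J'$ is a contiguous block of $L\setminus A\subseteq L\setminus G$. With this, the preamble inclusion $\Ev(\L{A}{})\subseteq\Ev(\L{G}{})$ is immediate: for each $J'\in\L{A}{}$ I pick the containing $J\in\L{G}{}$, invoke the monotone direction of \eqref{cond:nested_events} to get $\Ev(J')\subseteq\Ev(J)$, and take the union over $J'$. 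Note this uses only that larger index sets carry larger events.

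Next I would settle part~\ref{fact:nested_events_1} by specialising the preamble to the ground set $\I{\alpha}$ in place of $L$. The one combinatorial input needed is the subtree monotonicity $\G{\alpha}{H\cup K}\subseteq\G{\alpha}{H}$, which holds because the extra cuts in $K$ can only detach descendants of $\alpha$ (the vertex-set version of property~\ref{fact:subtrees_3} in Fact~\ref{fact:subtrees}). Applying the preamble twice inside $\I{\alpha}$ — first with removed sets $\G{\alpha}{H\cup K}\subseteq\G{\alpha}{H}$, then with $\varnothing\subseteq\G{\alpha}{H\cup K}$ — produces the chain $\Ev(\L{\G{\alpha}{H}}{\I{\alpha}})\subseteq\Ev(\L{\G{\alpha}{H\cup K}}{\I{\alpha}})\subseteq\Ev(\L{\varnothing}{\I{\alpha}})$, and the closing equality $\L{\varnothing}{\I{\alpha}}=\{\I{\alpha}\}$, hence $\Ev(\L{\varnothing}{\I{\alpha}})=\Ev(\I{\alpha})$, is just \eqref{def:L_G}.

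For part~\ref{fact:nested_events_3} the two ground sets differ, so the preamble does not apply verbatim; instead I would use the fragment nesting $\I{\beta}\subseteq\I{\alpha}$ together with $\G{\beta}{H}=\G{\alpha}{H}\cap\I{\beta}$ (the descendants of $\alpha$ lying in $\I{\beta}$ are exactly the descendants of $\beta$, again by the subtree structure of Fact~\ref{fact:subtrees}). Since the extra removed links $\G{\alpha}{H}\setminus\G{\beta}{H}$ lie outside $\I{\beta}$, each residual fragment $J'\in\L{\G{\beta}{H}}{\I{\beta}}$ meets no link of $\G{\alpha}{H}$ and is therefore contained in a single $J\in\L{\G{\alpha}{H}}{\I{\alpha}}$; one more application of the monotone direction of \eqref{cond:nested_events} and a union over $J'$ gives part~\ref{fact:nested_events_3}. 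Finally, part~\ref{fact:nested_events_2} splits into $\Ev(\beta)\subseteq\Ev(\I{\alpha})$, which is the monotone direction applied to $\{\beta\}\subseteq\I{\alpha}$, together with the exclusion $\Ev(\beta)\cap\Ev(J)=\varnothing$ for every $J\in\L{\G{\alpha}{H}}{\I{\alpha}}$. At the set level the latter is clear, since these fragments partition $\I{\alpha}\setminus\G{\alpha}{H}$ while $\beta\in\G{\alpha}{H}$, so $\{\beta\}\cap J=\varnothing$.

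The main obstacle is precisely this disjointness step. The inclusion direction of \eqref{cond:nested_events} transports set containments to event containments, but it does \emph{not} by itself transport disjointness of index sets to mutual exclusivity of events: two incomparable index sets may carry overlapping events under \eqref{cond:nested_events} alone. What makes the exclusion in part~\ref{fact:nested_events_2} go through is the \emph{exclusive} facet of the events being nested ``according to the set structure'' — disjoint index sets index mutually exclusive events — which I would record explicitly (it holds by construction for the event families used in the continuous- and discrete-time analyses). Once this is in place, $\Ev(\beta)$ is disjoint from each $\Ev(J)$, hence from their union $\Ev(\L{\G{\alpha}{H}}{\I{\alpha}})$, which completes part~\ref{fact:nested_events_2}.
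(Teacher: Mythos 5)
Your preamble and parts \ref{fact:nested_events_1} and \ref{fact:nested_events_3} are correct and essentially the paper's own argument; the only deviation is in \ref{fact:nested_events_3}, where the paper asserts the literal inclusion of fragment collections $\L{\G{\beta}{H}}{\I{\beta}}\subseteq\L{\G{\alpha}{H}}{\I{\alpha}}$, while you prove the weaker (and sufficient) statement that every fragment of the former is contained in one of the latter.

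For part \ref{fact:nested_events_2} your diagnosis is correct, and it exposes a genuine gap in the paper's own proof. The paper shows $\Ev(\beta)\subseteq\Ev(\I{\alpha})$ and then, from $\beta\notin J$ for all $J\in\L{\G{\alpha}{H}}{\I{\alpha}}$, concludes via the `only if' direction of \eqref{cond:nested_events} merely that $\Ev(\beta)\nsubseteq\Ev\big(\L{\G{\alpha}{H}}{\I{\alpha}}\big)$ (a step which itself needs care, since non-containment in each $\Ev(J)$ does not imply non-containment in their union); from this non-containment it infers the stated inclusion into the set difference. As you observe, that inference is invalid: the conclusion requires the disjointness $\Ev(\beta)\cap\Ev\big(\L{\G{\alpha}{H}}{\I{\alpha}}\big)=\varnothing$, and \eqref{cond:nested_events} alone cannot deliver it.

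The genuine gap in your own proposal is the parenthetical claim that the exclusivity property (disjoint index sets carry mutually exclusive events) holds ``by construction for the event families used in the continuous- and discrete-time analyses.'' It does hold for $\Ef$, but it is false for the auxiliary-process events $\Ex$ of Section~\ref{sec:auxiliary_process}: in the paper's own Example~\ref{ex:aux_events}, the outcome $\big(\w{\Ileq{3}}{\Ileq{3}},\w{\varnothing}{\Ileq{4}},\w{\varnothing}{\Igeq{4}},\w{4}{\I{4}},\wind{\I{3}}\big)$ belongs to $\Ex(4)\cap\Ex(\Ileq{3})$ even though $\{4\}\cap\Ileq{3}=\varnothing$ --- the auxiliary process is built precisely so that disjoint fragments can be hit simultaneously. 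The same outcome shows that part \ref{fact:nested_events_2} itself fails for $\Ex$: take $\alpha=\gamma=3$, $\beta=4$, $H=\varnothing$, so that $\Ileq{3}\in\L{\G{3}{\varnothing}}{\I{3}}$ and hence $\Ex(4)\nsubseteq\Ex(\I{3})\setminus\Ex\big(\L{\G{3}{\varnothing}}{\I{3}}\big)$. Consequently no hypothesis that $\Ex$ actually satisfies can close the gap; your added axiom makes the Fact true, but it excludes exactly the application (Corollary~\ref{coro:Max_Min_X_prozess}, Proposition~\ref{prop_discrete}) for which the Fact is later invoked. The correct conclusion is that the statement as formulated is too strong: it should be proved under your explicit exclusivity hypothesis (which covers $\Ef$ and the continuous-time results), and its use for the auxiliary process flagged as requiring a separate justification --- not asserted to hold by construction.
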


\begin{proof}
Let  $G\subseteq A\subseteq L$. By definition of  $\L{A}{}$ and  $\L{G}{}$,  for any $J\in \L{A}{}$ there is an $I\in\L{G}{}$ such that $J\subseteq I$ and thus $\Ev(J)\subseteq \Ev(I)$ by \eqref{cond:nested_events}. \ref{fact:nested_events_1} follows from the latter statement since $\varnothing\subseteq \G{\alpha}{H\cup K}\subseteq \G{\alpha}{H}$ for any $\alpha\in G$, $H,K\subseteq E$, and because $\L{\varnothing}{\I{\alpha}}=\{\I{\alpha}\}$. \ref{fact:nested_events_2}: Let $\beta\in\G{\alpha}{H}$ for some $\alpha \in G$. Since $\beta\in \I{\beta}\subseteq \I{\alpha}$, we know $\Ev(\beta)\subseteq \Ev(\I{\beta})\subseteq \Ev(\I{\alpha})$ by \eqref{cond:nested_events}. On the other hand, $\L{\G{\alpha}{H}}{\I{\alpha}} \setminus \varnothing$ is a partition of  $\I{\alpha}\setminus \G{\alpha}{H}$, so  $\beta\notin J$ for any $J\in\L{\G{\alpha}{H}}{\I{\alpha}}$ and thus $\Ev(\beta)\nsubseteq \Ev\big(\L{\G{\alpha}{H}}{\I{\alpha}}\big)$ by \eqref{cond:nested_events}. \ref{fact:nested_events_3} follows from \eqref{cond:nested_events} and the fact that $\L{\G{\beta}{H}}{\I{\beta}}\subseteq \L{\G{\alpha}{H}}{\I{\alpha}}$ for all $\beta\in \G{\alpha}{H}$.
\end{proof}

Let now $\t{}:\Xi\to\mathbb R_{\geqslant 0}$ be a function that assigns a scalar to each event in $\Xi$. Later, $\t{}$  will turn into the waiting time for the  event and thus generalise the previous meaning of $\t{}$, but here we are not tied to an underlying process. Let us write $\t{\mathcal G}:=\t{}(\mathcal G)$ and assume that 
\begin{equation}\label{cond:sigma}
\t{\mathcal G}\leqslant \t{\mathcal H} \text{ if and only if } \mathcal G\supseteq \mathcal H, \quad \mathcal H, \mathcal G \in \Xi.
\end{equation}

Our object of interest in this section is the event $\Max{\Ev}{H}\cap \m{\Ev}{H}$, where 
\begin{equation}\label{def:Max_G}
\Max{\Ev}{H}:=\big\{\max\{\t{\Ev(\alpha)}:\alpha\in \G{\gamma}{H}\}\leqslant t,\, t < \t{\Ev(\L{\G{\gamma}{H}}{})}\big\}, \quad G \subseteq L, \quad t\in \NN,
\end{equation}
and
\begin{equation}\label{def:m_G}
\m{\Ev}{H}:=\bigcap_{\alpha\in G} \big \{\t{\Ev(\alpha)}=\t{\Ev(\I{\alpha})\setminus \Ev(\L{\G{\alpha}{H}}{\I{\alpha}})}\big\}, \quad H \subseteq E.
\end{equation}
We will see later that $\Max{\Ev}{H}\cap \m{\Ev}{H}$ generalises the tree event in \eqref{def:tree_event_tau}. Let us only mention here that \eqref{def:m_G} may be understood as an order relation within each of the connected components of $\Ts-H$. Our aim is to express $\Max{\Ev}{H}\cap \m{\Ev}{H}$ in terms of a collection of certain minima combined with order relations, via an inclusion-exclusion principle. The order relations are those just defined, and the minima are analogous to the maxima, namely
\begin{equation}\label{def:Min_G(t)}
\Min{\Ev}{H}:=\big\{\min\{\t{\Ev(\alpha)}:\alpha\in \G{\gamma}{H}\}\leqslant t,\, t<\t{\Ev(\L{\G{\gamma}{H}}{})}\big\}, \quad H \subseteq L, \quad t\in \NN.
\end{equation}

We will proceed in the opposite direction and start with a decomposition of the joint event of the form $\Min{\Ev}{H}\cap \m{\Ev}{H}$ into a collection of maxima and then apply Möbius inversion on $\PP(T)$ from \eqref{eq:Moebius_inversion_D(T)}. Anticipating that the stump set will play a special role in our final tree probabilities, we formulate the following lemma.

\begin{lemma}\label{lemma:Min_Max}
Let $\Ts=(\gamma,G,E,L)$ be a fragmentation tree and $K\subseteq E$. If \eqref{cond:nested_events} and \eqref{cond:sigma} are satisfied, then
\begin{equation}\label{eq:Min_in_Max}
\P\big(\Min{\Ev}{K}\cap \m{\Ev}{K}\big) = \sum\limits_{C\in \Cc{\Tp{\gamma}{K}}} \P\big(\Max{\Ev}{K\cup C}\cap \m{\Ev}{K\cup C}\big),
\end{equation}
where $\P$ denotes a probability measure on $\Xi$ and $\Cc{\Tp{\gamma}{K}}$ is from \eqref{def:stump_cut_set}.
\end{lemma}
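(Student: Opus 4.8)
The plan is to prove \eqref{eq:Min_in_Max} by showing that the events indexed on the right-hand side form a \emph{disjoint partition} of the event on the left, so that the probabilities add. Throughout, write $R:=\G{\gamma}{K}$ for the stump set of $\Tp{\gamma}{K}$ and, for a fixed realisation, let $R_t:=\{\alpha\in R:\t{\Ev(\alpha)}\leqslant t\}$ be the set of stump vertices whose events have occurred by time $t$. The whole argument hinges on identifying $R_t$ with a stump set of $\Tp{\gamma}{K}$ and reading off the associated stump cut set $C:=\partial(R_t)\in\Cc{\Tp{\gamma}{K}}$; this $C$ is precisely the index under which a realisation should be counted on the right-hand side.

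First I would establish the combinatorial backbone. On $\m{\Ev}{K}$, the inclusion $\Ev(\alpha)\subseteq\Ev(\I{\beta})\setminus\Ev(\L{\G{\beta}{K}}{\I{\beta}})$ from Fact~\ref{fact:nested_events}\ref{fact:nested_events_2} (valid for $\alpha\in\G{\beta}{K}$) together with \eqref{cond:sigma} and the equality defining $\m{\Ev}{K}$ at $\beta$ forces $\t{\Ev(\beta)}\leqslant\t{\Ev(\alpha)}$ whenever $\beta\<\alpha$ inside the stump. Hence $R_t$ is an order ideal of $(R,\<)$; it is nonempty on $\Min{\Ev}{K}$, so being an ideal it contains $\gamma$. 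By Remark~\ref{remark:ideal} and the stump-set/ideal correspondence \eqref{eq:root_in_G_gamma}, $R_t\in\Rr{\Tp{\gamma}{K}}$, so it is cut out by a unique $C:=\partial(R_t)\in\Cc{\Tp{\gamma}{K}}$, and Fact~\ref{fact:subtrees}\ref{fact:subtrees_3} gives $\G{\gamma}{K\cup C}=R_t$. Conversely, on $\Max{\Ev}{K\cup C}$ I would show $\G{\gamma}{K\cup C}=R_t$ \emph{exactly}: the inclusion $\subseteq$ is the definition of $R_t$, while for $\supseteq$, any $\alpha\in R_t$ cut off by $C$ would be a descendant of some upper end $\beta$ of $C$, whence $\Ev(\alpha)\subseteq\Ev(\I{\alpha})\subseteq\Ev(\I{\beta})$ and $\t{\Ev(\alpha)}\geqslant\t{\Ev(\I{\beta})}>t$ by the external-fragment clause of $\Max{\Ev}{K\cup C}$, a contradiction. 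Since $C\mapsto\G{\gamma}{K\cup C}=\V{\gamma}{C}$ is injective, distinct $C$'s index events with distinct $R_t$, giving pairwise disjointness.

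Next I would match the joint events, proving that $\Max{\Ev}{K\cup C}\cap\m{\Ev}{K\cup C}$ is exactly the part of $\Min{\Ev}{K}\cap\m{\Ev}{K}$ on which $R_t=\G{\gamma}{K\cup C}$. The external-fragment clauses transfer through the monotonicity $\Ev(\L{R}{})\subseteq\Ev(\L{R_t}{})$ (Fact~\ref{fact:nested_events}) and \eqref{cond:sigma}, while $\min\{\t{\Ev(\alpha)}:\alpha\in R\}\leqslant t$ is immediate from $\gamma\in R_t$. For the order clauses, $\m{\Ev}{K}\Rightarrow\m{\Ev}{K\cup C}$ follows at once from Fact~\ref{fact:nested_events}\ref{fact:nested_events_1}, since enlarging the cut enlarges the subtracted external event and hence shrinks the set whose first-passage time must equal $\t{\Ev(\alpha)}$. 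The reverse implication is the delicate point: for $\alpha\notin\G{\gamma}{K\cup C}$ the two subtrees coincide by Fact~\ref{fact:subtrees}\ref{fact:subtrees_2}, so the clauses are identical, whereas for $\alpha\in R_t$ one must show that the extra events exposed by $C$ all occur after $t$ and so do not disturb the relevant minimum.

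I expect this last point to be the main obstacle. Concretely one needs $\t{\Ev(\I{\beta})}>t$ for every upper end $\beta$ of $C$. Here $\beta\notin R_t$ gives $\t{\Ev(\beta)}>t$, and the decomposition $\Ev(\I{\beta})=\big(\Ev(\I{\beta})\setminus\Ev(\L{\G{\beta}{K}}{\I{\beta}})\big)\cup\Ev(\L{\G{\beta}{K}}{\I{\beta}})$ reduces the claim to two ingredients: that $\t{}$ sends unions to minima (the first-passage nature of $\t{}$ consistent with \eqref{cond:sigma}), and that $\Ev(\L{\G{\beta}{K}}{\I{\beta}})\subseteq\Ev(\L{R}{})$, so that this summand has time exceeding $t$ by the external clause of $\Min{\Ev}{K}$; the other summand has time $\t{\Ev(\beta)}>t$ by $\m{\Ev}{K}$ at $\beta$. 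The analogous bound for the surviving external fragments then yields the external clause of $\Max{\Ev}{K\cup C}$ through the decomposition \eqref{eq:props_I_alpha} of $\L{R_t}{}$. Assembling the three steps, the right-hand events partition the left-hand event, and additivity of $\P$ over this disjoint union gives \eqref{eq:Min_in_Max}. The careful bookkeeping of which fragments are internal versus external across the two cut sets $K$ and $K\cup C$ is where most of the effort will go.
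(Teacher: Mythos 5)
Your proposal is correct and takes essentially the same route as the paper: your partition of $\Min{\Ev}{K}\cap\m{\Ev}{K}$ according to the fired set $R_t$ is exactly the paper's disjoint decomposition \eqref{eq:Min=Summe_Max}, with the order relation $\m{\Ev}{K}$ forcing $R_t$ to be a stump set, the identity \eqref{eq:props_I_alpha} yielding the external clause of $\Max{\Ev}{K\cup C}$, and Facts~\ref{fact:subtrees} and~\ref{fact:nested_events} handling the exchange of $\m{\Ev}{K}$ for $\m{\Ev}{K\cup C}$. The differences are presentational only (an event-level partition rather than a term-by-term probability computation, and a sandwich argument giving $\m{\Ev}{K}\subseteq\m{\Ev}{K\cup C}$ without invoking the Max event), and the union-to-minimum property of $\t{}$ that you flag explicitly is one the paper also relies on, tacitly, beyond condition \eqref{cond:sigma}.
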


\begin{proof}
We will decompose the probability for the joint event $\Min{\Ev}{K}\cap\m{\Ev}{K}$ part by part. We first express the minimum in $\Min{\Ev}{K}$ in terms maxima using the well-known disjoint decomposition, which here reads
\begin{align}\label{eq:Min=Summe_Max}
\begin{split}
\{\min\{\t{\Ev(\alpha)}:\alpha\in\G{\gamma}{K}\}\leqslant t\}&\\
&\hspace{-5cm} = \bigcup_{\varnothing \neq A \subseteq \G{\gamma}{K}} \{\max\{\t{\Ev(\alpha)}:\alpha\in A\}\leqslant t,\, t<\min\{\t{\Ev(\beta)} :\beta\in \G{\gamma}{K}\setminus A\}\}.
\end{split}
\end{align}
We now intersect both sides of \eqref{eq:Min=Summe_Max} with $\m{\Ev}{K}$ and then evaluate the probability. Since 
\begin{equation}\label{eq:mK_implies}
\m{\Ev}{K} \subseteq \{\t{\Ev(\alpha)}\leqslant \t{\Ev(\beta)} \text{ for all } \alpha\in G \text{ and } \beta\in \G{\alpha}{K}\}
\end{equation}
by Fact~\ref{fact:nested_events}~\ref{fact:nested_events_2}, we have
\[
\P\big(\big\{\max\{\t{\Ev(\alpha)}:\alpha\in A\}\leqslant t,\, t<\min\{\t{\Ev(\beta)}:\beta\in \G{\gamma}{K}\setminus A\}\big\}\cap\m{\Ev}{K}\big)=0
\]
for every subset $A\subseteq G$ that does not contain the root, or is not contiguous with respect to the partial order on $\Tps{\gamma}{K}$, that is, if $A$ is not a stump set of $\Tps{\gamma}{K}$. Using \eqref{eq:mK_implies} once more, we conclude that
\[
\big\{\min\big\{\t{\Ev(\beta)}:\beta\in \G{\gamma}{K}\setminus R\big\}\big\} \cap \m{\Ev}{K}=\big\{\min\big\{\t{\Ev(\beta)}:\beta\in M(\G{\gamma}{K}\setminus R)\big \}\big\} \cap \m{\Ev}{K};
\]
recall that $M(\G{\gamma}{K}\setminus R)$ is the set of vertices in $\G{\gamma}{K}\setminus R$ that are minimal with respect to $\<$. We may thus write
\begin{align*}\label{eq:Min=Summe_Max2}
\begin{split}
\P\big(\Min{\Ev}{K}& \cap \m{\Ev}{K} \big)  =\! \!\sum\limits_{R\in \Rr{\Tps{\gamma}{K}}}\!\!\!\P\big(\big\{\max\{\t{\Ev(\alpha)}:\alpha\in R\}\leqslant t,\, t<\t{\Ev(\L{\G{\gamma}{K}}{})},\\[.7em]
& \qquad\quad\qquad\qquad t<\min\{\t{\Ev(\beta)} :\beta\in M(\G{\gamma}{K}\!\setminus\! R)\}\}\cap \m{\Ev}{K} \big)\\[.7em]
& = \!\!\sum\limits_{R\in \Rr{\Tps{\gamma}{K}}} \!\! \!\P\big(\big\{\max\{\t{\Ev(\alpha)}:\alpha\in R\}\leqslant t<\t{\Ev(\L{\G{\gamma}{K}}{})},\\[-1.0em]
& \qquad\quad \qquad\qquad t<\min\big\{\t{\!\Ev(\I{\beta})\setminus \Ev(\L{\G{\beta}{K}}{\I{\beta}})}\!\!:\beta\in M(\G{\gamma}{K}\!\setminus\! R)\big\}\big\}\cap \m{\Ev}{K}\big)\\[.7em]
&  =\!\! \sum\limits_{R\in \Rr{\Tps{\gamma}{K}}}\!\!\! \P\big(\Max{\Ev}{K}\cap \m{\Ev}{K}\big).
\end{split}
\end{align*}
The second equality is due to the intersection with $\m{\Ev}{K}$, see \eqref{def:m_G}. In the third step, we used that 
\begin{equation*}\label{eq:intersection:events}
\Ev(\L{\G{\gamma}{K}}{})\bigcup_{\beta\in M(\G{\gamma}{K}\setminus R)} \Ev(\I{\beta})\setminus \Ev(\L{\G{\beta}{K}}{\I{\beta}})=\Ev(\L{R}{})\,,
\end{equation*}
which follows by \eqref{eq:props_I_alpha} applied to the stump tree $\Tps{\gamma}{K}$ with the help of Fact~\ref{fact:subtrees}~\ref{fact:subtrees_3}. Altogether this gives 
\begin{align}\label{eq:Min=Summe_Max3}
\begin{split}
\P\big(\Min{\Ev}{K}\cap \m{\Ev}{K} \big)
& = \sum\limits_{C\in \Cc{\Tp{\gamma}{K}}} \P\big(\Max{\Ev}{K\cup C}\cap \m{\Ev}{K}\big)
\end{split}
\end{align}
due to \eqref{eq:root_in_G_gamma} and Fact~\ref{fact:subtrees}~\ref{fact:subtrees_3}.
Let us finally consider the ordering relation $\m{\Ev}{K}$ in the joint event on the right-hand side of \eqref{eq:Min=Summe_Max3}. Consider first an $\alpha\notin \G{\gamma}{K\cup C}$, in which case we obtain $\Ev(\I{\alpha})\setminus\Ev\big(\L{\G{\alpha}{K}}{\I{\alpha}}\big)=\Ev(\I{\alpha})\setminus\Ev\big(\L{\G{\alpha}{K\cup C}}{\I{\alpha}}\big)$ by Fact~\ref{fact:subtrees}~\ref{fact:subtrees_2}. For an $\alpha\in \G{\gamma}{K\cup C}$, we have $\Max{\Ev}{K\cup C}\subseteq\{\t{\Ev(\alpha)}<\t{\Ev(\L{\G{\gamma}{K\cup C}}{})}\}$. Since furthermore $\t{\Ev(\L{\G{\gamma}{K\cup C}}{})}\leqslant \t{\Ev(\L{\G{\alpha}{K\cup C}}{\I{\alpha}})}$ by Fact~\ref{fact:nested_events}~\ref{fact:nested_events_3}, we can conclude 
\begin{align*}
\Max{\Ev}{K\cup C}& \cap\big \{\t{\Ev(\alpha)}=\t{\Ev(\I{\alpha})\setminus \Ev(\L{\G{\alpha}{K}}{\I{\alpha}})}\big\}\\
& =\Max{\Ev}{K\cup C}\cap \big\{\t{\Ev(\alpha)}=\t{\big(\Ev(\I{\alpha})\setminus \Ev(\L{\G{\alpha}{K}}{\I{\alpha}})\big)\setminus \Ev(\L{\G{\alpha}{K\cup C}}{\I{\alpha}})}\big\}.
\end{align*}
Since furthermore 
\[
\big(\Ev(\I{\alpha})\setminus \Ev\big(\L{\G{\alpha}{K}}{\I{\alpha}}\big)\big)\setminus \Ev\big(\L{\G{\alpha}{K\cup C}}{\I{\alpha}}\big)=\Ev(\I{\alpha})\setminus \Ev\big(\L{\G{\alpha}{K\cup C}}{\I{\alpha}}\big)
\]
by Fact~\ref{fact:nested_events}~\ref{fact:nested_events_1}, we can rewrite the joint event as
\[
\Max{\Ev}{K\cup C}\cap \m{\Ev}{K} =  \Max{\Ev}{K\cup C}\cap \m{\Ev}{K \cup C}.
\]
Together with \eqref{eq:Min=Summe_Max3} this completes the proof.
\end{proof}

\begin{proposition}\label{thm:Max_in_Min}
Under the conditions of Lemma~\ref{lemma:Min_Max}, the following holds for every $K\subseteq E$:
\begin{equation*} \label{eq:Max_in_Min}
\P\big(\Max{\Ev}{K}\cap \m{\Ev}{K}\big)=\sum\limits_{H\subseteq E_\gamma(K)} (-1)_{}^{|H|}\  \P\big(\Min{\Ev}{H\cup K}\cap \m{\Ev}{H\cup K}\big).
\end{equation*}
\end{proposition}

\begin{proof}
Recall the Möbius function $\mu$ for the pruning poset $\PP(T)$ in \eqref{eq:Moebius_function_D} and rewrite it as $\mu(H,K)\, (-1)_{}^{|H|-|K|}=\mathds 1^{}_{\{H\setminus K\, \in\, \Cc{\Tp{\gamma}{K}}\}}$ for $H,K\subseteq E, H\pD K$. This allows to reformulate \eqref{eq:Min_in_Max} from Lemma~\ref{lemma:Min_Max}  as
\begin{align}\label{eq:Moebius_trees}
\begin{split}
(-1)_{}^{|K|}\ & \P\big(\Min{\Ev}{K}\cap \m{\Ev}{K}\big)\\
& \qquad\qquad  = (-1)_{}^{|K|}\, \sum\limits_{H\subseteq E} \mathds 1^{}_{\{H\setminus K\,\in\, \Cc{\Tp{\gamma}{K}}\}}\  \P\big(\Max{\Ev}{H}\cap \m{\Ev}{H}\big)\\
& \qquad\qquad = \sum\limits_{H\pD K } \mu(H,K)\ (-1)_{}^{|H|}\ \P\big(\Max{\Ev}{H}\cap \m{\Ev}{H}\big), 
\end{split}
\end{align}
where the last equality is due to isomorphism on $\PP(\Tp{\gamma}{H})$ in \eqref{eq:isomorphism_D}. 
Möbius inversion on $\PP(T)$ (cf. \eqref{eq:Moebius_inversion_D(T)}) then yields the inverse  of \eqref{eq:Moebius_trees}:
\begin{align*}
(-1)_{}^{|K|}\,\P\big(\Max{\Ev}{K}\cap \m{\Ev}{K}\big)  &= \sum\limits_{H\pD K }\, (-1)_{}^{|H|}\ \P\big(\Min{\Ev}{H}\cap \m{\Ev}{H}\big) \\
& \hspace{-1.5cm} = \sum\limits_{H\subseteq E_\gamma(K)} (-1)_{}^{|H\cup K|}\ \P\big(\Min{\Ev}{H\cup K}\cap \m{\Ev}{H\cup K}\big),
\end{align*}
where the last equality is once more isomorphism on $\PP(\Tp{\gamma}{H})$. 
\end{proof}

We can now use Proposition~\ref{thm:Max_in_Min} to evaluate the tree probabilities in \eqref{def:tree_event_tau}. To this end, we define events for the fragmentation process as $\Ef(s):=\{s\}$ for all $s\in \Gamma$, so that $\t{\Ef(s)}=\t{s}$ is the waiting time at which the first link in $s$ is removed under $(\F{t}{})_{t\in \NN}^{}$. Since $L\setminus G = \cup_{J \in \L{G}{}} J$, we then have
\begin{equation}\label{simpl_1}
\t{L\setminus G}=\min\big \{\t{J}:J\in \L{G}{}\big \}= \min\big \{\t{\Ef(J)}:J\in \L{G}{}\big \}=\t{\Ef(\L{G}{})}. 
\end{equation}
Likewise, since 
\begin{equation}\label{eq:funny_event}
\G{\alpha}{H}= \I{\alpha}\setminus\big \{ J: J \in \L{\G{\alpha}{H}}{\I{\alpha}}\big \}=\Ef(\I{\alpha})\setminus \Ef(\L{\G{\alpha}{H}}{\I{\alpha}}),
\end{equation}
one has
\begin{equation}\label{simpl_2} 
\t{\G{\alpha}{H}}=\t{\Ef(\I{\alpha})\setminus \Ef(\L{\G{\alpha}{H}}{\I{\alpha}})}. 
\end{equation}
These seemingly more complicated expressions allow us to rewrite $\{\Ftree\}$ from \eqref{def:tree_event_tau} as the generalised tree event
\begin{equation}\label{tree_event}
\{\Ftree\}=\Max{\Ef}{\varnothing}\cap \m{\Ef}{\varnothing}
\end{equation}
with $\Max{\Ef}$ and $\m{\Ef}$ as defined in \eqref{def:Max_G} and \eqref{def:m_G}, and $\Ev$ replaced by $\Ef$.

\begin{corol}\label{corollary:prob_tree}
Let $\Ts=(\gamma,G,E,L)$ and $t\in \NN$ be given. The probability that  $(\F{t}{})_{0 \leqslant t' \leqslant t}^{}$ matches $\Ts$ is then given by
\begin{align}\label{eq:prob_tree}
\P\big(\Ftree\big)&= \sum\limits_{H \subseteq E} (-1)_{}^{|H|}~ \P \big(\Min{\Ef}{H}\cap  \m{\Ef}{H}\big)\\
& = \sum\limits_{H \subseteq E} (-1)_{}^{|H|}~ \P\big(\t{\G{\gamma}{H}}\leqslant t,\, t<\t{L\setminus \G{\gamma}{H}},\ \t{\alpha}=\t{\G{\alpha}{H}}\ \forall \ \alpha\in G \big).\notag
\end{align}
\end{corol}

The probability of a fragmentation tree $\Ts$ can thus be expressed as an alternating sum over all probabilities corresponding to fragmentation forests (i.e. the disjoint collection of fragmentation trees $\Tps{\alpha}{H}$, where either $\alpha=\gamma$, or $\alpha$ is an upper end of an edge in $H$). that can be obtained from $\Ts$ by edge deletion. For every given fragmentation forest $\Ts-H$, the ordering relation may be rewritten as
\begin{equation*}
\m{\Ef}{H} = \bigcap_{\Tps{\alpha}{H}\,\in\, \Ts-H}\quad \bigcap_{\nu \in \G{\alpha}{H}} \big\{\t{\nu}=\t{\G{\nu}{H}}\big\},
\end{equation*}
which shows that the ordering is now prescribed within each component of $\Ts-H$, in contrast to \eqref{def:tree_event_tau}, which prescribes the ordering within the entire tree.
The joint event $\Min{\Ef}{H}\cap \m{\Ef}{H}$ thus means that at least one link in the stump tree has been removed until time $t$, all the links in $L \setminus \G{\gamma}{H}$ are still intact, and the events corresponding to the vertices in $G$ happen in the prescribed order within each component. 

\begin{proof}
Choosing $\Ef(s)=\{s\}$ for all $s\in \Gamma$ clearly satisfies the nesting condition \eqref{cond:nested_events}. Furthermore, choosing $\t{}$ as the waiting time for the events $\Ef$ guarantees \eqref{cond:sigma}. We may thus use Proposition~\ref{thm:Max_in_Min} and apply it to \eqref{tree_event}, that is, for $K=\varnothing$. This yields
\[
\P\big(\Ftree\big)=\P\big(\Max{\Ef}{\varnothing}\cap  \m{\Ef}{\varnothing}\big)=\sum_{H\subseteq E} (-1)^{|H|}_{}\ \P\big(\Min{\Ef}{H}\cap  \m{\Ef}{H}\big)
\]
with $\Min{\Ef}$ from \eqref{def:Min_G(t)}.
Employing \eqref{simpl_1} and \eqref{simpl_2} once more, this time in the reverse direction, completes the proof.
\end{proof}

Although Corollary~\ref{coro:sum_trees_discrete} yields a nice decomposition of the matching probability, it cannot be evaluated in an elementary manner since the law of links to be added to $\F{t}{}$ changes over time. To see this, note first that the probability that nothing happens in a given time step is
\begin{equation}\label{def:lambda}
\P\big(\F{t+1}{}=G \mid \F{t}{}=G\big)= \prod_{J\in \L{G}{}} \big(1-\rh{J}{}\big )=:\lam{G}{},\quad \rh{J}:=\sum_{\alpha\in J}\rh{\alpha}.
\end{equation}
Secondly, suppose that, in some time step, link $\gamma\notin\{1,n\}$ is removed. Then $L$ splits into the two nonempty fragments $\Ileq{\gamma}=\{\beta\in L:\beta<\gamma\}$ and $\Igeq{\gamma}=\{\beta\in L:\beta>\gamma\}$. After removal of $\gamma$, the probability that a link in $\Ileq{\gamma}$ or $\Igeq{\gamma}$ is removed is $1-\lam{\gamma}{L}=\rh{\Ileq{\gamma}}+\rh{\Igeq{\gamma}}-\rh{\Ileq{\gamma}}\cdot \rh{\Igeq{\gamma}}$, whereas before removal of $\gamma$ it is $\rh{\Ileq{\gamma}}+\rh{\Igeq{\gamma}}=1-\lam{\gamma}{L}+\rh{\Ileq{\gamma}}\cdot \rh{\Igeq{\gamma}}$. We may thus think of $1-\lam{\gamma}{L}$ as the probability for a removal in $L\setminus\{\gamma\}$ when $\Ileq{\gamma}$ and $\Igeq{\gamma}$ are independent and of $\rh{\Ileq{\gamma}}\cdot \rh{\Igeq{\gamma}}$ as the additional probability for the case that the fragments are still dependent. We generalise the idea of a decomposition into dependent and independent parts in the next section.

\subsection{The auxiliary process}\label{sec:auxiliary_process}
We now construct an auxiliary process, which is state independent, and which jointly represents all transitions of interest for the fragmentation process and a given fragmentation tree.  We then use the auxiliary process to construct realisations of $(\F{t}{})_{t \in\NN}^{}$ that are compatible with a given fragmentation tree up to time $t$ and express matching events of the fragmentation process in terms of events of the auxiliary process.

\subsubsection{Construction of the auxiliary process}
Fix a fragmentation tree $\Ts=(\gamma,G,E,L)$. We aim at a construction of a sequence of i.i.d. random variables $(\X{t}{})^{}_{t\in \NN}$ where, for all $t\in \NN$, $\X{t}{}$ will be a family $\X{t}{}=(\X{t}{J})^{}_{J\in\S{}}$, and the $\X{t}{J}$'s will have a specific dependence for the $J$'s.
We construct this collection for every $t\in \NN$ inductively, starting with the (full or empty) external fragments of the tree and proceeding in a top-down manner.

For the start, let $t> 0$ be fixed and define $\X{t}{J}$ for each external fragments $J\in\L{G}{}$ independently for each $J$ on $\Omeg{J}:=\{\w{\varnothing}{J},\w{J}{J}\}$ with
\begin{equation}\label{eq: omega_leaves}
\X{t}{J}=\begin{cases} \w{\varnothing}{J}, & \text{with probability } 1-\rh{J},\\
\w{J}{J}, & \text{with probability } \rh{J}.
\end{cases}
\end{equation}
If $J=\varnothing$, then obviously $\X{t}{J}=\w{\varnothing}{\varnothing}$ with probability 1 (for consistency, set $\rh{\varnothing}:=0$).  Now consider the internal fragments $\I{\alpha}$, $\alpha \in G$. As already mentioned, every fragment may be pieced together from its two descendant fragments
$\Ileq{\alpha}=\{\beta\in \I{\alpha}: \beta<\alpha\}$ and $\Igeq{\alpha}=\{\beta\in \I{\alpha}: \beta>\alpha\}$. Namely,  $\I{\alpha}=\Ileq{\alpha}\,\cup\,\{\alpha\}\,\cup\,\Igeq{\alpha}$;  $\Ileq{\alpha}$ and  $\Igeq{\alpha}$ may be internal fragments or (empty or full) external fragments. We now proceed down the tree inductively by taking, in every step, one $\alpha$, for which $\X{t}{\Ileq{\alpha}}$ and $\X{t}{\Igeq{\alpha}}$  have already been defined (as independent processes on $\Omeg{\Ileq{\alpha}}$ and $\Omeg{\Igeq{\alpha}}$).  $\X{t}{\I{\alpha}}$ will live on the state space
$
\Omeg{\I{\alpha}}:=\Big\{\w{\varnothing}{\I{\alpha}},\w{\alpha}{\I{\alpha}},\wind{\I{\alpha}},\wdep{\I{\alpha}}\Big \}, 
$
and we define the composite event
$
\w{\I{\alpha}}{\I{\alpha}}:=\Omeg{\I{\alpha}} \setminus \w{\varnothing}{\I{\alpha}},
$
which blends in with the notation in \eqref{eq: omega_leaves}. By slight abuse of notation, we will sometimes write $\X{t}{J}=\w{J}{J}$ for a $J\in \S$ even though the correct statement would be $\X{t}{J}\in\w{J}{J}$ if $J$ is an internal fragment, and  $\X{t}{J}=\w{J}{J}$ if $J$ is an external fragment.
We now construct $\X{t}{\I{\alpha}}$  by specifying
\begin{equation*} \label{eq:law_Xt_1}
\X{t}{\I{\alpha}} = \wind{\I{\alpha}} \quad \text{if } \X{t}{\Ileq{\alpha}}=\w{\Ileq{\alpha}}{\Ileq{\alpha}} \quad \text{or}\quad  \X{t}{\Igeq{\alpha}}=\w{\Igeq{\alpha}}{\Igeq{\alpha}}
\end{equation*}
(this is the case with probability $1-\lam{\alpha}{\I{\alpha}}$) and, if $ \X{t}{\Ileq{\alpha}}=\w{\varnothing}{\Ileq{\alpha}} \text{ and }  \X{t}{\Igeq{\alpha}}=\w{\varnothing}{\Igeq{\alpha}}$ (which is the case with probability $\lam{\alpha}{\I{\alpha}}$), we set 
\begin{equation*} \label{eq:law_Xt_2}
\X{t}{\I{\alpha}} =\begin{cases}
\w{\varnothing}{\I{\alpha}},& \text{with probability } \nicefrac{(1-\rh{\I{\alpha}})\,}{\,\lam{\alpha}{\I{\alpha}}},\\
\w{\alpha}{\I{\alpha}}, &\text{with probability } \nicefrac{\rh{\alpha}\,}{\,\lam{\alpha}{\I{\alpha}}},\\
\wdep{\I{\alpha}}, & \text{with probability } \nicefrac{\rh{\Ileq{\alpha}}\ \cdot\ \rh{\Igeq{\alpha}}\,}{\,\lam{\alpha}{\I{\alpha}}}
 \end{cases}
\end{equation*}
independently of what has been decided for the previous fragments. Here, $\lam{\alpha}{\I{\alpha}}$ is defined as in~\eqref{def:lambda}, but with respect to the link set $I_\alpha$ of the tree with root $\alpha$. This means that $\Omeg{\Ileq{\alpha}}\times\Omeg{\Igeq{\alpha}}\setminus (\w{\varnothing}{\Ileq{\alpha}},\w{\varnothing}{\Igeq{\alpha}})$ is identified with the event $\wind{\I{\alpha}}$, whereas the   remaining element $(\w{\varnothing}{\Ileq{\alpha}},\w{\varnothing}{\Igeq{\alpha}})$ is `split up' into the elements of $\Omeg{\I{\alpha}}\setminus \wind{\I{\alpha}}$. 

We see that under this construction, $\X{t}{\I{\alpha}}$ has the law 
\begin{equation}\label{eq:law_subprocesses_Ialpha}
\begin{split}
\X{t}{\I{\alpha}} =\begin{cases}
\w{\varnothing}{\I{\alpha}},& \text{with probability } 1-\rh{{\I{\alpha}}},\\
\w{\alpha}{\I{\alpha}},&  \text{with probability } \rh{\alpha},\\
\wdep{\I{\alpha}},&  \text{with probability }\rh{\Ileq{\alpha}}\cdot\,\rh{\Igeq{\alpha}},\\
\wind{\I{\alpha}}& \text{with probability } 1-\lam{\alpha}{\I{\alpha}}.
\end{cases}
\end{split}
\end{equation}
The construction is completed when $\X{t}{L}=\X{t}{\I{\gamma}}$ has been reached. Altogether, we then have the family $\X{t}{}=(\X{t}{J})^{}_{J \in \S{}}$ with state space $\Omega:=\bigtimes_{J\in\S{}} \Omeg{J}$. The sequence of random variables $X=(\X{t}{})^{}_{t\in \NN}$ is defined to be i.i.d. in $t$. The $\X{t}{J}$ are independent for all disjoint fragments; in particular, for every stump set $R\in\Rr{\Ts}$, the family $(\X{t}{J})^{}_{J\in\L{R}{}}$ with state space $\bigtimes_{J\in\L{R}{}} \Omeg{J}$ is independent. In contrast, for nondisjoint fragments there are dependencies, such as
\begin{equation}\label{eq:omega_varnothing}
\X{t}{\I{\alpha}} \in \Omeg{\I{\alpha}}\setminus \wind{\I{\alpha}} \text{ for } \alpha\in G \text{ implies } \X{t}{J} = \w{\varnothing}{J} \text{ for all } J\in \S^{\I{\alpha}}\setminus \I{\alpha}.
\end{equation}
The other way round, this means
\begin{equation}\label{eq:omega_I}
\X{t}{J} \neq \w{\varnothing}{J} \text{ for  some }  J \in \S^{\I{\alpha}}\setminus\I{\alpha} \text{ implies } \X{t}{\I{\alpha}} \in \w{\I{\alpha}}{\I{\alpha}}.
\end{equation}

\paragraph{Events and Waiting times.} We now define events $\Ex(s)$ for all $s\in \Gamma$ based on the process $\X{t}{}$. To this end, define $\pi^{}_I:\Omega \to \Omeg{I}$, $I\in\S$, as the canonical projection. We set for all $\alpha\in G$ and $J\in \S{}$:  
\[
\Ex(\alpha):=\Big\{\omega\in \Omega: \pi^{}_{\I{\alpha}}(\omega)=\w{\alpha}{\I{\alpha}}\Big\} ,\qquad \Ex(J):=\Big\{\omega\in \Omega :\pi^{}_J(\omega)=\w{J}{J}\Big\}.
\]
Due to \eqref{eq:omega_varnothing} and \eqref{eq:omega_I}, these events satisfy the nesting condition \eqref{cond:nested_events}. 

\begin{example}\label{ex:aux_events}
Consider the fragmentation tree in Figure~\ref{fig:tree_exam}. For every $t\in\NN$, $\X{t}{}$ is given by the family $
\X{t}{}=\Big(\X{t}{\Ileq{3}},\X{t}{\Ileq{4}},\X{t}{\Igeq{4}},\X{t}{\I{4}},\X{t}{\I{3}}\Big).
$
Events $\omega\in\Omega$ that satisfy $\P(\X{t}{}=\omega)>0$ are   
\[
\begin{array}{lll}
\Big(\w{\varnothing}{\Ileq{3}},\w{\varnothing}{\Ileq{4}},\w{\varnothing}{\Igeq{4}},\w{\varnothing}{4},\w{\varnothing}{\I{3}}\Big),
& \Big(\w{\varnothing}{\Ileq{3}},\w{\varnothing}{\Ileq{4}},\w{\varnothing}{\Igeq{4}},\w{\varnothing}{\I{4}},\w{3}{\I{3}}\Big) ,
& \Big(\w{\varnothing}{\Ileq{3}},\w{\varnothing}{\Ileq{4}},\w{\varnothing}{\Igeq{4}},\w{\varnothing}{\I{4}},\wdep{\I{3}}\Big),\\[.5em]
\Big(\w{\varnothing}{\Ileq{3}},\w{\varnothing}{\Ileq{4}},\w{\varnothing}{\Igeq{4}},\w{4}{\I{4}},\wind{\I{3}}\Big),
& \Big(\w{\varnothing}{\Ileq{3}},\w{\varnothing}{\Ileq{4}},\w{\varnothing}{\Igeq{4}},\wdep{\I{4}},\wind{\I{3}}\Big), 
& \Big(\w{\Ileq{3}}{\Ileq{3}},\w{\varnothing}{\Ileq{4}},\w{\varnothing}{\Igeq{4}},\w{\varnothing}{\I{4}},\wind{\I{3}}\Big),\\[.5em]
\Big(\w{\Ileq{3}}{\Ileq{3}},\w{\varnothing}{\Ileq{4}},\w{\varnothing}{\Igeq{4}},\w{4}{\I{4}},\wind{\I{3}}\Big),
& \Big(\w{\Ileq{3}}{\Ileq{3}},\w{\varnothing}{\Ileq{4}},\w{\varnothing}{\Igeq{4}},\wdep{\I{4}},\wind{\I{3}}\Big),
& \Big(\w{\varnothing}{\Ileq{3}},\w{\varnothing}{\Ileq{4}},\w{\Igeq{4}}{\Igeq{4}},\wind{\I{4}},\wind{\I{3}}\Big),\\[.5em]
\Big(\w{\Ileq{3}}{\Ileq{3}},\w{\varnothing}{\Ileq{4}},\w{\Igeq{4}}{\Igeq{4}},\wind{\I{4}},\wind{\I{3}}\Big). & & 
\end{array}
\]
Events of interest are, for example,
\[
\Ex(4)=\Big\{ \Big(\w{\varnothing}{\Ileq{3}},\w{\varnothing}{\Ileq{4}},\w{\varnothing}{\Igeq{4}},\w{4}{\I{4}},\wind{\I{3}}\Big),\ \Big(\w{\Ileq{3}}{\Ileq{3}},\w{\varnothing}{\Ileq{4}},\w{\varnothing}{\Igeq{4}},\w{4}{\I{4}},\wind{\I{3}}\Big)\Big\}
\]
and 
\begin{align*}
\Ex(\I{4})\setminus\Ex\Big(\L{\G{4}{\varnothing}}{\I{4}}\Big)=\Big\{ & \Big(\w{\varnothing}{\Ileq{3}},\w{\varnothing}{\Ileq{4}},\w{\varnothing}{\Igeq{4}},\w{4}{\I{4}},\wind{\I{3}}\Big), \Big(\w{\varnothing}{\Ileq{3}},\w{\varnothing}{\Ileq{4}},\w{\varnothing}{\Igeq{4}},\wdep{\I{4}},\wind{\I{3}}\Big),\\[.5em]
& \Big(\w{\Ileq{3}}{\Ileq{4}},\w{\varnothing}{\Ileq{4}},\w{\varnothing}{\Igeq{4}},\w{4}{\I{4}},\wind{\I{3}}\Big), \Big(\w{\Ileq{4}}{\Ileq{3}},\w{\varnothing}{\Ileq{4}},\w{\varnothing}{\Igeq{4}},\wdep{\I{4}},\wind{\I{3}}\Big)\Big\}. \qedhere
\end{align*}
\end{example}

\begin{SCfigure}[50]
\setlength{\fboxsep}{5pt}
\includegraphics[width=.45\textwidth]{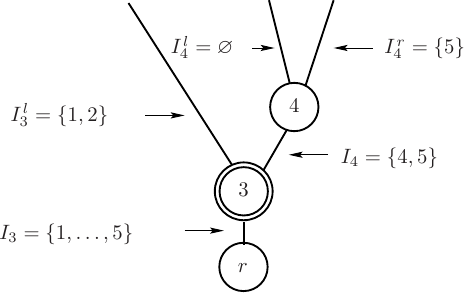}
\caption{\label{fig:tree_exam}Fragmentation tree with vertex set $G=\{3,4\}$, link set $ L=\{1,\ldots,5\}$, internal fragments $\I{3}$ and $\I{4}$, and external fragments $\Ileq{3}$, $\Ileq{4}$ and $\Igeq{4}$. Here, $\L{\G{4}{\varnothing}}{\I{4}}=\big\{\Ileq{4},\Igeq{4}\big\}$.}
\end{SCfigure}

Let $\t{\Ex(s)}$ denote the waiting time for the event $\Ex(s)$, $s \in \Gamma$ (condition \eqref{cond:sigma} is then obviously satisfied).
By construction, $\t{\Ex(\alpha)}$ and $\t{\Ex(J)}$ are geometrically distributed with parameters $\rh{\alpha}$ and $\rh{J}$, $\alpha \in G$, $J \in \S$. 
Since for every $\alpha\in G, H\subseteq E$, the family $\big(\X{t}{J}\big)^{}_{J\in \L{\G{\alpha}{H}}{\I{\alpha}}}$  is independent, the family of waiting times $(\t{\Ex(J)})_{J\in \L{\G{\alpha}{H}}{\I{\alpha}}}$ is independent as well, and, as a minimum of independent geometric variables, $\t{\Ex(\L{\G{\alpha}{H}}{\I{\alpha}})}$ is geometrically distributed with parameter $1-\lam{\G{\alpha}{H}}{\I{\alpha}}$. 
For any $H\subseteq E$, the waiting time $\t{\Ex(\I{\alpha})\setminus \Ex(\L{\G{\alpha}{H}}{\I{\alpha}})}$  is geometric with parameter $\rh{\I{\alpha}}-(1-\lam{\G{\alpha}{H}}{\I{\alpha}})=\lam{\G{\alpha}{H}}{\I{\alpha}}-\lam{\varnothing}{\I{\alpha}}$ (recall that $\Ex(\L{\G{\alpha}{H}}{\I{\alpha}})\subseteq \Ex(\I{\alpha})$ by Fact~\ref{fact:nested_events_1}).
Since the conditions of Proposition~\ref{thm:Max_in_Min} are satisfied, we can directly conclude 

\begin{corol}\label{coro:Max_Min_X_prozess}
Let $\Ts=(\gamma,G,E,L)$ be a fragmentation tree. Then
\[
\P\big(\Max{\Ex}{\varnothing}\cap \m{\Ex}{\varnothing}\big)= \sum\limits_{H \subseteq E} (-1)_{}^{|H|}~ \P \big(\Min{\Ex}{H}\cap \m{\Ex}{H}\big),
\]
with $\m{\Ex}$, $\Min{\Ex}$ and $\Max{\Ex}$ as in \eqref{def:Max_G}--\eqref{def:Min_G(t)}, and $\Ev$ replaced by $\Ex$.
\end{corol}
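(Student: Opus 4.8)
The plan is to recognise this corollary as simply the specialisation $K=\varnothing$ of Theorem~\ref{thm:Max_in_Min}, applied to the family of events $\Ex$ attached to the auxiliary process. That theorem (through Lemma~\ref{lemma:Min_Max}) asks only for the two structural hypotheses \eqref{cond:nested_events} and \eqref{cond:sigma} on the events and their waiting times, plus a probability measure on $\Xi$; everything else is formal. So the whole task reduces to checking that the auxiliary-process events meet these hypotheses and then reading off the formula at $K=\varnothing$. First I would record that the i.i.d.\ sequence $(\X{t}{})_{t\in\NN}$ supplies the required probability space: each $\Ex(s)$, $s\in\Gamma$, is a measurable subset of $\Omega=\bigtimes_{J\in\S}\Omeg{J}$, $\Xi$ is the Boolean system they generate, and $\P$ is the law induced by the sequence.

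The one point needing genuine verification is the nesting condition \eqref{cond:nested_events}, i.e.\ that $\Ex(s_1)\subseteq\Ex(s_2)$ if and only if $s_1\subseteq s_2$ for $s_1,s_2\in\Gamma$. Here I would unfold the projection definitions of $\Ex(\alpha)$ and $\Ex(J)$ and distinguish according to whether each $s_i$ is a link or a fragment. The decisive implications are exactly the dependency relations \eqref{eq:omega_varnothing} and \eqref{eq:omega_I}: the former says that a nonempty outcome on an internal fragment $\I{\alpha}$ forces the $\varnothing$-outcome on every strictly smaller fragment, the latter that a nonempty outcome on any descendant fragment forces a nonempty outcome on $\I{\alpha}$. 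Together these encode the monotone transport of ``a link has been removed'' up and down the fragment hierarchy, and they deliver both directions of \eqref{cond:nested_events}. This verification is already asserted in the text preceding the corollary; in a careful write-up I would spell out the handful of cases, but no new idea is required beyond the construction itself.

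The condition \eqref{cond:sigma} is then automatic: $\t{}$ is the first-passage time of the i.i.d.\ sequence into the relevant event, so a larger event is reached no later than a smaller one, which gives $\t{\mathcal G}\leqslant\t{\mathcal H}\Leftrightarrow\mathcal G\supseteq\mathcal H$ on $\Xi$. With both hypotheses in force I would invoke Theorem~\ref{thm:Max_in_Min} at $K=\varnothing$. Using $E_\gamma(\varnothing)=E$ (with no cut edges the subtree rooted at $\gamma$ is all of $T$) and $H\cup\varnothing=H$, the sum over $H\subseteq E_\gamma(\varnothing)$ becomes a sum over $H\subseteq E$, and the identity reduces verbatim to the asserted one with $\Ev$ replaced by $\Ex$. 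The main obstacle is therefore not analytic at all: it is the bookkeeping behind the nesting check \eqref{cond:nested_events}, which is precisely where the care taken in the inductive construction of the auxiliary process is repaid.
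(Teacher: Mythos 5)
Your proposal is correct and follows exactly the paper's route: the paper also obtains this corollary by noting that the auxiliary-process events $\Ex$ satisfy the nesting condition \eqref{cond:nested_events} (via \eqref{eq:omega_varnothing} and \eqref{eq:omega_I}) and that the waiting times satisfy \eqref{cond:sigma}, and then invoking Theorem~\ref{thm:Max_in_Min} at $K=\varnothing$ so that the sum runs over all $H\subseteq E$. Your added care in spelling out the nesting check and the reduction $E_\gamma(\varnothing)=E$ is just a more explicit rendering of what the paper states in the two paragraphs preceding the corollary.
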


\subsubsection{Constructing the fragmentation process from the auxiliary process.}  We now present a pathwise construction for realisations of $(\F{t}{})_{t \in\NN}^{}$ that have the correct law as long as they are compatible with a given fragmentation tree $\Ts=(\gamma,G,E,L)$. We say that $\F{t}{}$ is \emph{compatible} with $\Ts$ if $\F{t}{}\in\Rr{\Ts}$. In this case, $(\F{t'}{})^{}_{0\leqslant t'\leqslant t}$ matches a stump tree of $\Ts$. We use the auxiliary process $(\X{t}{})_{t\in\NN}^{}$ for the construction.

Recall that the transition from $\F{t-1}{}$ to $\F{t}{}$ is determined by the family of independent random variables $(\Af{t}{J})_{J\in \mathcal L_{ F_{t-1}^{}}}$ (see Definition~\ref{def:Fragmentation_discrete}). Now fix a tree $\Ts=(\gamma,G,E,L)$ and construct the enlarged family $\big(\Af{t}{J}\big)^{}_{J\in\S{}}$ from $\X{t}{}$ by prescribing that, for all $t> 0$, 
\begin{align}\label{eq:coupling} 
\begin{split}
\Af{t}{J} = \varnothing, & \text{ if and only if } \X{t}{J}=\w{\varnothing}{J}, \text{ for all } J\in\S{},\\
\Af{t}{\I{\alpha}} = \{\alpha\}, & \text{ if and only if } \X{t}{\I{\alpha}}=\w{\alpha}{\I{\alpha}}, \text{ for all } \alpha\in G.
\end{split}
\end{align} 
This entails that $\Af{t}{J}=\varnothing$ with probability $1-\rh{J}$ for all $J\in\S$, and $\Af{t}{\I{\alpha}}=\{\alpha\}$ with probability $\rh{\alpha}$, $\alpha\in G$. On the other hand, it implies that
\begin{equation}\label{eq:law_A1}
\Af{t}{J} \in J,\  \text{ if and only if }  \X{t}{J}=\w{J}{J}, \text{ for } J\in\L{G}{},  
\end{equation}
which happens with probability $\rh{J}$,
and 
\begin{equation}\label{eq:law_A2}
\Af{t}{\I{\alpha}} \in \I{\alpha} \setminus \alpha,\ \text{ if and only if } \X{t}{\I{\alpha}}\in\w{\I{\alpha}}{\I{\alpha}}\setminus \w{\alpha}{\I{\alpha}}, \text{ for } \alpha\in G,
\end{equation}
which is the case with probability $\rh{\I{\alpha}\setminus\, \alpha}$. If we want to know the precise event in these cases, we can use additional randomness to decide for $\Af{t}{J}=\{\beta\}$ with probability $\rh{\beta}$ for all $\beta\in J\in\L{G}{}$, and $\Af{t}{\I{\alpha}}=\{\beta\}$, $\beta\in\I{\alpha}\setminus\alpha$, for all $\alpha\in G$, but this is never required in our construction; what matters is that, under the construction in \eqref{eq:coupling}, each $\Af{t}{J}$ has the right probabilities for the \emph{compatible events} (those in \eqref{eq:coupling}) and their complements, for every given $t$ and every given $J\in \S{}$. Also, the ${\Af{t}{J}}$ inherit from the $\X{t}{J}$ the i.i.d. property over $t$ and the independence across disjoint fragments. 

We now proceed as follows. Start with $\F{0}{}=\varnothing$, which is certainly compatible with the given $\Ts$. If $\F{t-1}{}$ is compatible,  then construct $\F{t}{}$ from $\F{t-1}{}$ according to  Definition~\ref{def:Fragmentation_discrete}, but use
the $\big(\Af{t}{J}\big)^{}_{J\in\S{}}$ from \eqref{eq:coupling}--\eqref{eq:law_A2}. If only \emph{compatible events} occur for all $J \in \L{ F_{t-1}^{}}{}$, then  $\F{t}{}$ is compatible as well. If at least one \textit{incompatible} event occurs (at least one event of those in \eqref{eq:law_A1} or \eqref{eq:law_A2}), then $\F{t}{}$ is incompatible. We say the construction \emph{fails} at time $t$ and discontinue it.
Since the subfamily $(\Af{t}{J})^{}_{J\in\mathcal L_{F_{t-1}^{}}}$ has the right law for the compatible events, we know that $(\F{t}{})_{t\in \NN}^{}$ has the right law for all $t< t_f$, where $t_f$ is the failure time.

\begin{proposition} \label{prop:taus_in_Ts}
For every given fragmentation tree $\Ts=(\gamma, G,E,L)$ and the pathwise construction of $\F{}{}$ described above, we have
\begin{equation}\label{eq:events_F_X}
\Max{\Ef}{\varnothing}\cap  \m{\Ef}{\varnothing} = \Max{\Ex}{\varnothing}\cap  \m{\Ex}{\varnothing}, \quad t\in \mathbb N_0
\end{equation}
and
\begin{equation}\label{eq:law_F_in_X}
\P\Big(\Max{\Ef}{\varnothing}\cap \m{\Ef}{\varnothing}\Big) = \P\Big( \Max{\Ex}{\varnothing}\cap  \m{\Ex}{\varnothing}\Big), \quad t\in \mathbb N_0,
\end{equation}
where $\Max{\Ef}$ and $\mh{\Ef}$ are as in \eqref{def:Max_G} and \eqref{def:m_G} with $\Ev$ replaced by $\Ef$.
\end{proposition}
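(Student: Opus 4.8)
The plan is to establish the pathwise identity \eqref{eq:events_F_X} as a genuine equality of subsets of the common probability space carrying $(\X{t}{})_{t\in\NN}$ and the extra randomness used in the coupling, and then to read off \eqref{eq:law_F_in_X} by applying $\P$ to both sides. Throughout I will use that, by the discrete-time analogue of \eqref{tree_event}, the left-hand event is exactly the matching event $\{\Fhtree\}$, and that the construction of $\Fh{}{}$ is faithful — i.e.\ $(\Fh{t'}{})_{0\leqslant t'\leqslant t}$ carries the law of the discrete-time fragmentation process — for all times strictly before the failure time $t_f$. The bridge between the two processes is the coupling \eqref{eq:coupling}--\eqref{eq:law_A2} together with the fragment decomposition \eqref{eq:props_I_alpha}: whenever $\Fh{t'-1}{}=R\in\Rr{\Ts}$, the active fragment hosting a minimal unremoved vertex $\beta\in M(G\setminus R)$ is precisely $\I{\beta}$, so by \eqref{eq:coupling} link $\beta$ is removed at step $t'$ if and only if $\X{t'}{\I{\beta}}=\w{\beta}{\I{\beta}}$, i.e.\ if and only if the auxiliary event $\Ex(\beta)$ fires.

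For the inclusion $\Max{\Ex}{\varnothing}\cap\m{\Ex}{\varnothing}\subseteq\{\Fhtree\}$ I would run an induction on $t'=0,1,\dots,t$, carried out on the auxiliary matching event. The invariant is that $\Fh{t'}{}=\{\alpha\in G:\t{\Ex(\alpha)}\leqslant t'\}$ is a stump set. In the inductive step, given $\Fh{t'-1}{}=R$, I must check that only \emph{compatible} transitions can occur at step $t'$, i.e.\ that the incompatible events of \eqref{eq:law_A1}--\eqref{eq:law_A2} are excluded. No fragment of $\L{G}{}$ fires by $t$ because $t<\t{\Ex(\L{G}{})}$; since the active external fragments lie in $\L{G}{}$ by \eqref{eq:props_I_alpha}, and every subtree-external fragment $\L{\G{\beta}{\varnothing}}{\I{\beta}}$ also lies in $\L{G}{}$ by Fact~\ref{fact:nested_events}~\ref{fact:nested_events_3}, no external removal \eqref{eq:law_A1} and no externally triggered $\wind{\I{\beta}}$ can happen. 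Moreover no $\wind{\I{\beta}}$ (from an internal child) or $\wdep{\I{\beta}}$ can occur at an active $\I{\beta}$ before $\t{\Ex(\beta)}$: by the nesting relation \eqref{eq:omega_varnothing} every such event is an internal firing of the subtree rooted at $\beta$, which by the order relation in $\m{\Ex}{\varnothing}$ cannot precede $\t{\Ex(\beta)}=\t{\Ex(\I{\beta})\setminus\Ex(\L{\G{\beta}{\varnothing}}{\I{\beta}})}$. Finally \eqref{eq:omega_varnothing} forbids a parent and a child from firing in the same step, so no ties spoil the ideal property. This gives compatibility throughout $[0,t]$, the waiting-time identities $\th{\alpha}=\t{\Ex(\alpha)}$ and $\th{J}=\t{\Ex(J)}$, and hence $\Fh{}{}$ matches $\Ts$.

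For the reverse inclusion I would argue that on $\{\Fhtree\}$ the set $\Fh{t'}{}$ is, at every step, an ideal of the tree order, hence a stump set, so the whole trajectory up to $t$ is compatible and every transition is of the compatible type \eqref{eq:coupling}. Reading the coupling backwards then forces $\X{t'}{J}=\w{\varnothing}{J}$ for active external $J$ and $\X{t'}{\I{\beta}}\in\{\w{\varnothing}{\I{\beta}},\w{\beta}{\I{\beta}}\}$ at the active vertex $\beta$. The one delicate point is to rule out a premature auxiliary firing $\X{t'}{\I{\alpha}}=\w{\alpha}{\I{\alpha}}$ at a time $t'<\th{\alpha}$, when $\I{\alpha}$ is not yet active because an ancestor of $\alpha$ is still unremoved: by \eqref{eq:omega_I} such a firing propagates up the tree and forces $\wind{\I{\nu}}$ at the minimal unremoved ancestor $\nu$, i.e.\ an incompatible transition at the active fragment $\I{\nu}$, contradicting compatibility. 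With premature firings excluded, the same waiting-time identities hold on $\{\Fhtree\}$, and the defining inequalities and order relations \eqref{def:Max_G}, \eqref{def:m_G} of $\Max{\Ex}{\varnothing}\cap\m{\Ex}{\varnothing}$ translate verbatim into those of $\Maxh{\Ef}{\varnothing}\cap\mh{\Ef}{\varnothing}$, yielding the claimed equality \eqref{eq:events_F_X}.

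Equation \eqref{eq:law_F_in_X} then follows by applying $\P$ to the set identity \eqref{eq:events_F_X}; since this event is contained in $\{t_f>t\}$, where the construction reproduces the genuine law of $(\Fh{t'}{})_{0\leqslant t'\leqslant t}$, the resulting probability is indeed the tree probability of the discrete-time fragmentation process. I expect the main obstacle to be the bookkeeping in the two inductions — specifically, showing that the order relations encoded in $\m{\Ex}{\varnothing}$, when combined with the nesting relations \eqref{eq:omega_varnothing}--\eqref{eq:omega_I} and the fragment decomposition \eqref{eq:props_I_alpha}, are exactly strong enough to forbid simultaneously the external firings \eqref{eq:law_A1} and the dependent/independent internal firings \eqref{eq:law_A2} at active fragments, so that \emph{compatibility together with matching} becomes equivalent to the auxiliary matching event.
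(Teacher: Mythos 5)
Your proposal is correct, and it has the right two-layer structure: a genuine pathwise set identity for \eqref{eq:events_F_X} on the common probability space, followed by the law transfer for \eqref{eq:law_F_in_X} via the observation that the matching event consists only of compatible trajectories, on which the constructed process carries the genuine law before the failure time; that second layer is exactly the paper's argument. Where you differ is in how the set identity is established. You prove two inclusions separately: the direction ``auxiliary matching implies fragmentation matching'' by induction over time steps with the invariant that $\Fh{t'}{}=\{\alpha\in G:\t{\Ex(\alpha)}\leqslant t'\}$ is a stump set, and the converse by noting that a matching trajectory is compatible at every step and then excluding premature auxiliary firings via the propagation rules \eqref{eq:omega_varnothing}--\eqref{eq:omega_I}. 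The paper instead first rewrites the ordering relation $\m{\Ev}{\varnothing}$ on the event $\Max{\Ev}{\varnothing}$, for arbitrary nested events $\Ev$, so that it involves only the waiting times $\t{\Ev(\I{\alpha})}$; this reduces \eqref{eq:events_F_X} to the single waiting-time identification \eqref{eq:equivalence_occurence_times}, which is then proved by a top-down induction along the tree (root, then internal, then external fragments) using the recursion \eqref{eq:recursion_F} and the coupling \eqref{eq:coupling}. The trade-off is as follows. The paper's preliminary rewriting disposes of the awkward events $\Ex(\I{\alpha})\setminus\Ex\big(\L{\G{\alpha}{\varnothing}}{\I{\alpha}}\big)$ once and for all, so its induction tracks one waiting time per fragment; in your version these events survive to the end, and your closing ``translate verbatim'' step conceals precisely the computation that the first firing of $\I{\alpha}$ within $[0,t]$, while the subtree-external fragments are quiet, coincides with $\th{\G{\alpha}{\varnothing}}$ --- this is provable from what you have already established, but it should be written out rather than waved at. Conversely, your time induction makes explicit two points the paper treats tersely: that on either matching event the construction never fails before $t$ (which the law-transfer step needs), and the strictness of the inequality $\t{\Ex(\I{\beta})}>\th{\I{\nu}}$ for a child $\beta$ of $\nu$, which your premature-firing argument (a firing below an unremoved ancestor forces $\wind{\I{\nu}}$ at the active fragment, an incompatible event) supplies cleanly. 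Since both proofs ultimately rest on the same pillars --- the coupling \eqref{eq:coupling}--\eqref{eq:law_A2} and the dependence structure \eqref{eq:omega_varnothing}--\eqref{eq:omega_I} --- the difference is one of proof architecture rather than substance, and your architecture is sound.
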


The description in terms of the waiting times of the auxiliary process offers a great advantage since this law is known and does not change over time. 

\begin{proof}
We start by considering the events $\Max{\Ev}{\varnothing}$ and $\m{\Ev}{\varnothing}$ for general $\Ev$. We know by definition that $\Max{\Ev}{\varnothing}\subseteq\{\t{\Ev(\alpha)}<\t{\Ev(\L{G}{})} \text{ for all } \alpha\in G\}$. Since by Fact~\ref{fact:nested_events}~\ref{fact:nested_events_3} furthermore $\t{\Ev(\L{G}{})}\leqslant\t{\Ev(\L{\G{\alpha}{\varnothing}}{\I{\alpha}})}$ for all $\alpha\in G$, we obtain
\begin{align*}\label{eq:ancest_rel}
\Max{\Ev}{\varnothing}& \cap \Big\{\t{\Ev(\alpha)} = \t{\Ev(\I{\alpha})\setminus \Ev(\L{\G{\alpha}{\varnothing}}{\I{\alpha}})}\Big\} \\
& =\Max{\Ev}{\varnothing}\cap \Big\{\t{\Ev(\alpha)}=\min\Big\{\t{\!\Ev(\I{\alpha})\setminus \Ev(\L{\G{\alpha}{\varnothing}}{\I{\alpha}})},\t{\Ev(\L{\G{\alpha}{\varnothing}}{\I{\alpha}})}\Big\}\Big\}\\
& = \Max{\Ev}{\varnothing}\cap \Big\{\t{\Ev(\alpha)}=\t{\Ev(\I{\alpha})}\Big\}
\end{align*}
for every $\alpha\in G$. We can therefore rewrite
\begin{equation}\label{eq:unreduced_1}
\Max{\Ev}{\varnothing}\cap \m{\Ev}{\varnothing}=\Max{\Ev}{\varnothing}\cap  \bigcap_{\alpha\in G} \big\{\t{\Ev(\alpha)}=\t{\Ev(\I{\alpha})}\big\}.
\end{equation}
The choice  $\Ev=\Ef$ or $\Ev=\Ex$ in \eqref{eq:unreduced_1} turns the claim \eqref{eq:events_F_X} into
\begin{equation}\label{eq:unreduced_2}
\Max{\Ef}{\varnothing}\cap  \bigcap_{\alpha\in G} \!\big\{\t{\Ef(\alpha)}=\t{\Ef(\I{\alpha})}\big\}
=\Max{\Ex}{\varnothing}\cap \bigcap_{\alpha\in G} \!\big\{\t{\Ex(\alpha)}=\t{\Ex(\I{\alpha})}\big\}.
\end{equation}
Recall that $\Ef(s)=\{s\}$ for all $s\in \Gamma$, such that $\t{\Ef(s)}=\t{ s}=\min\big \{\t{\alpha}:\alpha\in s\big \}$ is the time at which the first link in $s$ is removed. Now, assume that we have shown the identification
\begin{equation}\label{eq:equivalence_occurence_times}
\t{J}  = \t{\Ex(J)} \text{ for all } J\in  \S{ } \text{ given } \bigcap_{\alpha\in G} \{\t{\alpha}=\t{\I{\alpha}}\}.
\end{equation}
Due to \eqref{eq:coupling}--\eqref{eq:law_A2}, it then follows under the pathwise construction of $\F{}{}$ from the auxiliary process that 
$
\big\{\t{\alpha}=\t{\I{\alpha}}\}\,=\,\big\{\t{\Ex(\alpha)}=\t{\Ex(\I{\alpha})}\}
$. 
Together with \eqref{eq:equivalence_occurence_times}, this implies $\t{\alpha}=\t{\Ex(\alpha)}$ for all $\alpha\in G$. Equation \eqref{eq:equivalence_occurence_times} therefore entails \eqref{eq:unreduced_2}, so
it suffices to show \eqref{eq:equivalence_occurence_times}.

We first show the relation \eqref{eq:equivalence_occurence_times} for all internal fragments (i.e. for all $\I{\alpha}$, $\alpha\in G$).
Start with the set of links $\I{\gamma}=L$ and initial value $\F{0}{L}=\{\varnothing\}$. For $t\geqslant 1$, the first event $\big\{\F{t}{L}\neq\varnothing\big \}$ happens when $\{\Af{t}{L}\in L\}$ for the first time; this happens at $t=\t{L}$. Under \eqref{eq:coupling}, $\t{L}$ corresponds to the first time at which $\{\X{t}{L} \in \w{L}{L}\}$ (at time $\t{\Ex(L)}$); this gives $\t{L}=\t{\Ex(L)}$.

Now consider a link $\beta\in G\setminus \{\gamma\}$, and assume that we have already identified $\t{\I{\nu}}=\t{\Ex(\I{\nu})}$ for the parent node $\nu$ of $\beta$. Given $\t{\nu}=\t{\I{\nu}}$, we conclude $\t{\I{\nu}}<\t{\I{\beta}}$ since $\nu\notin\I{\beta}\subset \I{\nu}$. This yields $\F{{\mathcal T}^{}_{\I{\nu}}}{\I{\beta}}=\varnothing$ by \eqref{eq:recursion_F}. 
Now consider the first time $t'>\t{\I{\nu}}$ the event $\big \{\F{t'}{\I{\beta}}\neq\varnothing\big \}$ occurs. Again by \eqref{eq:recursion_F}, this time is $\t{\I{\beta}}$. Due to \eqref{eq:coupling}, the event $\big \{\F{t'}{\I{\beta}}\neq\varnothing\big \}$ with $t'>\t{\I{\nu}}$ happens when $\big \{\X{t'}{\I{\beta}}\in\w{\I{\beta}}{\I{\beta}}\big \}$ for the first time, which is at time $\t{\Ex(\I{\beta})}$. Since we assumed that $\t{\I{\nu}}=\t{\Ex(\I{\nu})}$ and since $\t{\Ex(\I{\nu})}\leqslant\t{\Ex(\I{\beta})}$ due to \eqref{cond:nested_events}, we conclude $\t{\Ex(\I{\beta})}>\t{\I{\nu}}$, which gives $\t{\Ex(\I{\beta})}=\t{\I{\beta}}$.

It remains to show the equality of the waiting times for the full external fragments $J\in\L{G}{}$. For each such fragment $J$, denote by $\delta:=\delta_J\in G$ the unique link for which $J\in \{\Ileq{\delta},\Igeq{\delta}\}$. Assume that $\t{\delta}=\t{\I{\delta}}$ and one has already identified $\t{\I{\delta}}=\t{\Ex(\I{\delta})}$. 
With the same arguments as above, we conclude that under the given assumption $\t{J}=\t{\Ex(J)}$. 

To finally show \eqref{eq:law_F_in_X} recall that, for a given fragmentation tree $\Ts$, $(\F{t}{})_{t\in\NN}^{}$ has the right law for all $t<t_f$, where $t_f$ is the first time at which $\F{t_f}{}$ fails to be compatible with the tree. Since $\Max{\Ef}{\varnothing}\cap \m{\Ef}{\varnothing}$ describes a sequence of events that are all compatible with $\Ts$, \eqref{eq:law_F_in_X} follows.
\end{proof}

\subsection{Explicit tree probabilities}\label{sec:expl_tree_prob_discrete}
We can now harvest the consequences and state an explicit expression for tree probabilities. 

\begin{theorem} \label{prop_discrete}
For a given fragmentation tree $\Ts=(\gamma, G,E,L)$, one has $\P\big(\Ftree\big)=(1-\rh{L})^t=\big (\lam{\varnothing}{L}\big)^t$ for $G=\varnothing$, and, for $G\neq\varnothing$, 
\begin{equation*} \label{eq:prop_tree_discrete}
\P\big(\Ftree\big)=\sum\limits_{H\subseteq E}(-1)^{|H|}~\Big[\big(\lam{\G{\gamma}{H}}{L}\big)^t-\big(\lam{\varnothing}{L}\big)^t\Big]~\prod\limits_{\alpha\in G}~\frac{\rh{\alpha}}{\lam{\G{\alpha}{H}}{\I{\alpha}}-\lam{\varnothing}{\I{\alpha}}},
\end{equation*}
where the $\lambda$'s are defined as in~\eqref{def:lambda}.
\end{theorem}

\begin{proof}
We first employ Proposition~\ref{prop:taus_in_Ts} together with Corollary~\ref{coro:Max_Min_X_prozess} to rewrite the matching probability corresponding to the fragmentation process in terms of the auxiliary process:
\begin{equation}\label{eq:Min_Max_auxiliary}
\P\big(\Ftree\big)=\P\big(\Max{\Ex}{\varnothing}\cap  \m{\Ex}{\varnothing}\big)= \sum\limits_{H\subseteq E} (-1)^{|H|}\, \P\big( \Min{\Ex}{H}\cap \m{\Ex}{H}\big).
\end{equation}
Now fix a set of edges $H\subseteq E$ and consider the event $\Min{\Ex}{H}$ on the right-hand side of \eqref{eq:Min_Max_auxiliary}. The family $\big(\t{\Ex(\alpha)}\big)^{}_{\alpha\in \G{\gamma}{H}}$ is not independent, so  $\min\{\t{\Ex(\alpha)} : \alpha\in \G{\gamma}{H}\}$ is not a simple  geometric waiting time. But, taking the intersection with $\m{\Ex}{H}$, we can use that $\min\{\t{\Ex(\alpha)}:\alpha\in \G{\gamma}{H}\}=\t{\Ex(\gamma)}$ by Fact~\ref{fact:nested_events}~\ref{fact:nested_events_2} and, again intersecting with $\m{\Ex}{H}$, that $\t{\Ex(\gamma)}=\t{\Ex(\L{\varnothing}{})\setminus \Ex(\L{\G{\gamma}{H}}{})}$ since $\I{\gamma}=L=\L{\varnothing}{}$. This gives
\begin{align*}\label{eq:rewrite_minb}
\Min{\Ex}{H}\cap \m{\Ex}{H} &= \big\{\t{\Ex(\L{\varnothing}{}) \setminus \Ex(\L{\G{\gamma}{H}}{})}\leqslant t,\, t<\t{\Ex(\L{\G{\gamma}{H}}{})}\big\}\cap \m{\Ex}{H}\\
& =\big\{\min\big\{\t{\Ex(\L{\varnothing}{})\setminus \Ex(\L{\G{\gamma}{H}}{})},\t{\Ex(\L{\G{\gamma}{H}}{})}\big\}\! \leqslant t,\, t<\t{\Ex(\L{\G{\gamma}{H}}{})}\big\}\\
& \hspace{1cm} \cap \m{\Ex}{H}\\
& = \big\{\t{\Ex(\L{\varnothing}{})}\leqslant t,\, t < \t{\Ex(\L{\G{\gamma}{H}}{})}\big\}\cap \m{\Ex}{H}.
\end{align*} 
Let us now investigate the connection between $\big \{\t{\Ex(\L{\varnothing}{})}\leqslant t,\,t < \t{\Ex(\L{\G{\gamma}{H}}{})} \big \}$ and $\m{\Ex}{H}$.
To this end, consider first an $\alpha\notin \G{\gamma}{H}$. For this we know that there is a $J\in\L{\G{\gamma}{H}}{}$ such that $\I{\alpha}\subseteq J$; thus $\Ex(\alpha)\subseteq \Ex(\I{\alpha})\setminus \Ex(\L{\G{\alpha}{H}}{\I{\alpha}})\subseteq \Ex(\L{\G{\gamma}{H}}{})\subseteq \Ex(\L{\varnothing}{})$ by \eqref{cond:nested_events}.
Since the minimum of a collection of events is independent of the order in which (some of) the events occur, we obtain the independence of $\big\{\t{\Ex(\L{\varnothing}{})}\leqslant t,\,t< \t{\Ex(\L{\G{\gamma}{H}}{})}\big\}$ and $\big \{\t{\Ex(\alpha)}=\t{\Ex(\I{\alpha})\setminus  \Ex(\L{\G{\alpha}{H}}{\I{\alpha}})}\big\}$ for every $\alpha\notin\G{\gamma}{H}$.
Consider now $\alpha\in \G{\gamma}{H}$. We can then obviously decompose the event $\big \{\t{\Ex(\L{\varnothing}{})}\leqslant t,\,t < \t{\Ex(\L{\G{\gamma}{H}}{})}\big \}$ into 
\[
\big\{\t{\Ex(\L{\varnothing}{})}\leqslant t\big\} \cap \big\{\t{\Ex(\L{\G{\gamma}{H}}{L}\setminus \L{\G{\alpha}{H}}{\I{\alpha}})}> t\big\}\cap \big\{\t{\Ex(\L{\G{\alpha}{H}}{\I{\alpha}})}>t\big\}.
\]
Due to the independence of the $\X{t}{J}$'s for disjoint sets $J$, we conclude that the event $\big \{\t{\Ex(\alpha)}=\t{\Ex(\I{\alpha})\setminus  \Ex(\L{\G{\alpha}{H}}{\I{\alpha}})} \big \}$ is independent of $\big \{\t{\Ex(\L{\G{\gamma}{H}}{}\setminus \L{\G{\alpha}{H}}{\I{\alpha}})}>t\big \}$. The independence of $\big \{\t{\Ex(\alpha)}=\t{\Ex(\I{\alpha})\setminus  \Ex(\L{\G{\alpha}{H}}{\I{\alpha}})}\big \}$ and $\big \{\t{\Ex(\L{\G{\alpha}{H}}{\I{\alpha}})}>t\big \}$ is obvious since the respective events $\Ex(\I{\alpha})\setminus  \Ex(\L{\G{\alpha}{H}}{\I{\alpha}})$ and $\Ex(\L{\G{\alpha}{H}}{\I{\alpha}})$ are disjoint; the independence of $\{\t{\Ex(\L{\varnothing}{})}\leqslant t\}$ follows again by the argument that the minimum of a collection of events is independent of the order in which (some of) the events occur. Altogether, we obtain
\begin{align*}
\P \big(\Min{\Ex}{H}\cap \m{\Ex}{H}\big) &  = \big [ \P \big(\t{\Ex(\L{\G{\gamma}{H}}{})}>t \big) - \P\big(\t{\Ex(\L{\varnothing}{}{})} >t\big) \big ]\times  \P\big(\m{\Ex}{H}\big),
\end{align*}
where we used that $\Ex(\L{\G{\gamma}{H}}{})\subseteq \Ex(\L{\varnothing}{})$ by Fact~\ref{fact:nested_events}~\ref{fact:nested_events_1}.
Since for $\alpha,\beta\in G$ with $\alpha\prec \beta$,  $\Ex(\alpha)\notin \Ex(\I{\beta})$ and hence $\Ex(\alpha)\notin \Ex(\I{\beta})\setminus \Ex(\L{\G{\alpha}{H}}{})$, we can furthermore decompose the probability for $\m{\Ex}{H}$ into independent factors:
\begin{align*}
\P\big(\m{\Ex}{H}\big)& = \prod\limits_{\alpha \in G} \P\big(\t{\Ex(\alpha)}= \t{\Ex(\I{\alpha}) \setminus \Ex(\L{\G{\alpha}{H}}{\I{\alpha}})}\big).
\end{align*}
Now recall that each $\t{\Ex(\L{\G{\alpha}{H}}{\I{\alpha}})}$ is geometric with parameter $1-\lam{\G{\alpha}{H}}{\I{\alpha}}$ and that each $\t{\Ex(\I{\alpha}) \setminus \Ex(\L{\G{\alpha}{H}}{\I{\alpha}})}$ is geometric with parameter $\lam{\G{\alpha}{H}}{\I{\alpha}}-\lam{\varnothing}{\I{\alpha}}$. All in all, we obtain
\begin{align*}
\P \big(\Min{\Ex}{H}\cap \m{\Ex}{H}\big) 
& =\Big [\big(1-\big (1-\lam{\G{\gamma}{H}}{L} \big )\big)^t\!-\big (1-\big (1-\lam{\varnothing}{L}\big) \big)^t \Big ] \prod\limits_{\alpha \in G} \frac{\rh{\alpha}}{\lam{\G{\alpha}{H}}{\I{\alpha}}-\lam{\varnothing}{\I{\alpha}}} \\
& =\Big[ \big(\lam{\G{\gamma}{H}}{L} \big)^t-\big(\lam{\varnothing}{L}\big)^t\Big]\ \prod\limits_{\alpha \in G}\  \frac{\rh{\alpha}}{\lam{\G{\alpha}{H}}{\I{\alpha}}-\lam{\varnothing}{\I{\alpha}}}\ .
\end{align*}
Equation~\eqref{eq:Min_Max_auxiliary} then completes the proof.
\end{proof}

\begin{corol}\label{coro:sum_trees_discrete}
For every  $G\subseteq L$, the probability of the fragmentation process to be in state $G$ at time $t\in \NN$ is given by
\begin{equation}\label{eq:sum_trees_discrete}
\P\big(\F{t}{}=G\big)= \sum_{\Ts\in\tau(G,L)} \P\big(\Ftree\big), 
\end{equation}
with $\P\big(\Ftree\big)$ as in Theorem~\ref{prop_discrete} and $\tau(G,L)$  the set of all fragmentation trees with vertex set $G$ and underlying link set $L$. 
\end{corol}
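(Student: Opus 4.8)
The plan is to deduce the statement directly from the disjoint decomposition of the event $\{\Fh{t}{}=G\}$ over fragmentation trees that was already assembled in Section~\ref{sec:connection_trees}, and then to add up the matching probabilities supplied by Proposition~\ref{prop_discrete}. As a first step I would recall that, by \eqref{eq:distribution_Ft} and the expansion of $\{\Fh{t}{}=G\}$ given in Section~\ref{sec:connection_trees}, one has
\begin{equation*}
\{\Fh{t}{}=G\}=\bigcup_{\Ts\in\tau(G,L)}\{\Fhtree\},
\end{equation*}
the union of all matching events $\{\Fhtree\}$ of \eqref{def:tree_event_tau} over fragmentation trees with vertex set $G$ and link set $L$. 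Since $\tau(G,L)$ is finite (its cardinality is the Catalan number $C^{}_{|G|}$), the only thing that prevents immediate summation of probabilities is disjointness.

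The key step, and essentially the only point needing genuine care, is therefore to argue that this union is \emph{disjoint}. I would show that every realisation $(\Fh{t'}{})^{}_{0\leqslant t'\leqslant t}$ with $\Fh{t}{}=G$ matches exactly one tree, by reading off that tree recursively: its root must be the first link of $G$ to be removed from $L$, and within each fragment $\I{\alpha}$ that subsequently arises the local root is forced to be the first link removed from that fragment. This is precisely the content of the equalities $\t{\alpha}=\t{\G{\alpha}{\varnothing}}$ appearing in \eqref{def:tree_event_tau}. Because at most one link is removed per fragment in a single time step (Definition~\ref{def:Fragmentation_discrete}), any two comparable links carry \emph{strictly} ordered removal times, so at each stage the local root is unambiguous; the resulting (binary-search-tree type) shape coincides with a single $\Ts\in\tau(G,L)$. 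Hence the matching events are pairwise disjoint and partition $\{\Fh{t}{}=G\}$.

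Finally, I would invoke finite additivity of $\P$ to obtain $\P(\Fh{t}{}=G)=\sum_{\Ts\in\tau(G,L)}\P(\Fhtree)$, which is exactly \eqref{eq:sum_trees_discrete}, and close by substituting the closed-form expression from Proposition~\ref{prop_discrete}, with the degenerate case $G=\varnothing$ (the single empty planted tree, contributing $(\lam{\varnothing}{L})^t$) handled as the trivial base case. The main obstacle is thus the disjointness argument above; everything else is bookkeeping, since the analytic work has already been done in Proposition~\ref{prop_discrete}, which in turn rests on Proposition~\ref{prop:taus_in_Ts} and Corollary~\ref{corollary:prob_tree}.
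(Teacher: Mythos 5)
Your proposal is correct and takes essentially the same route as the paper: the paper states this corollary without a separate proof, since it follows immediately from the decomposition $\{\Fh{t}{}=G\}=\bigcup_{\Ts\in\tau(G,L)}\{\Fhtree\}$ displayed in Section~\ref{sec:connection_trees} together with the matching probabilities of Proposition~\ref{prop_discrete}. Your explicit disjointness argument (uniqueness of the first-removed link in each fragment, forced by Definition~\ref{def:Fragmentation_discrete}) is precisely the fact the paper leaves implicit in the phrase ``collect all realisations that agree on the order of events,'' so it is a welcome but not divergent addition.
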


There is (in general) no simple explicit expression for the sum in \eqref{eq:sum_trees_discrete}. But the following reformulation of the $\lambda$'s 
\[
\lam{\G{\alpha}{H}}{\I{\alpha}}-\lam{\varnothing}{\I{\alpha}}=\prod_{J\in\L{\G{\alpha}{H}}{\I{\alpha}}}\big(1-\rh{J}\big)-\Big(1-\sum_{\nu\in \I{\alpha}} \rh{\nu} \Big)= \sum_{\nu \in \G{\alpha}{H}} \rh{\nu} + \sum_{\substack{J \subseteq \L{\G{\alpha}{H}}{\I{\alpha}},\\[.1em]
|J|>1}} (-1)_{}^{|J|} \prod_{I\in J}\, \rh{I}
\]
shows that there is one exception, namely the case $|\L{\G{\alpha}{H}}{\I{\alpha}}|\leqslant 1$ for every $\alpha\in G$ and every $H\subseteq E$. If $L=\{1,\ldots,n\}$, this is true for $G\subseteq \{1,n\}$, in which case 
\begin{equation*}\label{eq:sum_discrete}
\P\big(\F{t}{}=G\big)=\sum_{\varnothing\neq H\subseteq G} (-1)_{}^{|H|}\ \big[ \big(\lam{H}{L} \big)^t-\big(\lam{\varnothing}{L}\big)^t\big].
\end{equation*}
This goes together with observations in \cite{WangenheimBaakeBaake}, where a subset of links that only contains the `ends' of $L$ induced significant simplifications. This now becomes clear in the light of our event structure: $\alpha\in \{1,n \}$ implies that either $\Ileq{\alpha}=\varnothing$ or $\Igeq{\alpha}=\varnothing$, so that the probability for $\wdep{\I{\alpha}}$ vanishes, cf. \eqref{eq:law_subprocesses_Ialpha}.

\subsection{Continuous time}\label{sec:tree_prob_cont}
In discrete time, removing a given link forbids to remove any other link in the same fragment in the same time step. This is different in the analogue process in continuous time, which was treated comprehensively in \cite{BaakeBaake1} and is dealt with somewhat informally here. In continuous time,  simultaneous events are automatically excluded, so  the links are effectively independent and the fragmentation process simplifies noticeably. In fact, $(\wF{t}{})_{t \geqslant 0}^{}$ is then  defined as the following  continuous-time Markov chain with values in $\Pot{L}$ and  initial state $\wF{0}{} = \varnothing$: Conditional on $\wF{t}{}=G$,  $G\subseteq L$,  a transition to $G\cup\{\alpha\}$ occurs at rate $\rc{\alpha}>0$, for every $\alpha\in L \setminus G$.

If we denote by $\t{\alpha}$ the waiting time for link $\alpha$ to be removed and by $\t{K}:=\min\{\t{\alpha}:\alpha\in K\}$ the time at which the first link in $K\subseteq L$ is removed, then the $\t{\alpha}$ follow independent exponential distributions with parameters $\rc{\alpha}$. The explicit expression for the probability of the fragmentation process to be in state $G$ at time $t>0$ is therefore immediate:
\begin{align*}
\P\big(\wF{t}{}=G\big) & = \P\big(\max\{\t{\alpha}:\alpha\in G\}\leqslant t < \t{ L\setminus G} \big) =  \exp\Big (-\sum_{\alpha\in L\setminus G} \rc{\alpha}~ t \Big) \prod\limits_{\alpha\in G}\big(1-\exp(-\rc{\alpha}~ t)\big).
\end{align*}

For comparison with  discrete time, it is nonetheless interesting to additionally consider the tree probabilities in
 continuous time. As a matter of fact, Corollary~\ref{corollary:prob_tree} also holds for continuous time,  since it is a general
  statement in terms of waiting times that is not tied to the specific law in discrete time. Evaluating Corollary~\ref{corollary:prob_tree} for  independent  exponential  $\t{\alpha}$ and using the fact that the minimum of a collection of
   independent exponential waiting times is independent of the order in which the events appear gives the analogue of
    Theorem~\ref{prop_discrete}:

\begin{corol} \label{prop_continuous} For a given fragmentation tree $\Ts=(\gamma,G,E,L)$ and a fixed $t\geqslant 0$, one has $\P\big(\Ftree\big)=\exp(-\sum\limits_{\alpha\in L} \rc{\alpha}\, t)$ for $G=\varnothing$ and, for every $\varnothing \neq G\subseteq L$,
\begin{equation*}  \label{eq:prop_tree_continuous}
\P\big(\Ftree\big)=\!\sum\limits_{H\subseteq E} (-1)_{}^{|H|}\Big(1-\exp \Big(-\sum\limits_{\alpha\in \G{\gamma}{H}} \rc{\alpha}\,t\Big)\Big)\exp\Big(-\sum\limits_{\beta\in L\backslash \G{\gamma}{H}} \rc{\beta}\,t \Big) \prod\limits_{\alpha\in G} \frac{\rc{\alpha}}{\sum\limits_{\nu\in \G{\alpha}{H}} \rc{\nu}}.
\end{equation*}
\end{corol}

\subsection*{Acknowledgements}
It is our pleasure to thank Michael Baake for valuable discussions and Fernando Cordero for his help to improve the manuscript. The authors gratefully acknowledge
the support from the Priority Programme \emph{Probabilistic Structures in
Evolution (SPP 1590)}, which is funded by Deutsche Forschungsgemeinschaft
(German Research Foundation, DFG).

\bibliographystyle{plainnat} 
\renewcommand{\bibfont}{\small}
\setlength{\bibsep}{1pt}
\vspace*{-.5cm}

\end{document}